\newtheorem{theorem}{Theorem}[section]
\theoremstyle{plain}
\newtheorem{corollary}[theorem]{Corollary}
\newtheorem{lemma}[theorem]{Lemma}
\newtheorem{proposition}[theorem]{Proposition}
\newtheorem{remark}{Remark}
\numberwithin{equation}{section}
\begin{document}

\title[An Indefinite Elliptic Problem on $\mathbb{R}^N$]{\it An Indefinite Elliptic Problem on $\mathbb{R}^N$ Autonomous at Infinity:\\ The Crossing Effect of the Spectrum and the Nonlinearity}
\author{Liliane A. Maia}
\address{Departamento de Matem\'{a}tica, UNB, 70910-900 Bras\'{\i}lia, Brazil.}
\email{lilimaia@unb.br}
\author{Mayra Soares}
\address{Departamento de Matem\'{a}tica, PUC-Rio, 22451-900 Rio de Janeiro, Brazil.}
\email{ssc\_mayra@hotmail.com}
\thanks{FAPDF 0193.001300/2016, 0193.001765/2017, CNPq 308378/2017-2.}
\date{\today} 
\maketitle 
\vspace{-0.3cm}
\begin{changemargin}{-1.1cm}{-1.1cm}  
\quad \quad \quad \ \rule{15.3cm}{0.001cm} 
\begin{abstract}
{\small We present a new approach to solve an indefinite Schr\"odinger Equation autonomous at infinity, by identifying the relation between the arrangement of the spectrum of the concerned operator and the behavior of the nonlinearity at zero and at infinity.  The main novelty is how to set a skillful linking structure that overcome the lack of compactness, depending on the growth of the nonlinear term and making use of information about the autonomous problem at infinity. Here no monotonicity assumption is required on the nonlinearity, which may be sign-changing as well as the potential. Furthermore, depending on the nonlinearity, the limit of the potential at infinity may be non-positive, so that zero may be an interior point in the essential spectrum of the Schr\"odinger operator.}
\bigskip
\newline
\textsc{Key words: }{\small Nonlinear Schr\"{o}dinger Equation; Autonomous at Infinity; Asymptotically Linear; Variational Methods; Spectral Theory; Linking Structure.}
\bigskip
\newline
\textsc{MSC 2010: } {\small 35j10, 35j20, 35j60, 35p05.}\medskip
\newline	
\rule{15.3cm}{0.001cm}
\end{abstract}
\end{changemargin}
\maketitle 
\vspace{0.5cm}
\section{Introduction}
\label{sec:introduction}

\qquad In this paper we establish the existence of a nontrivial weak solution to the following problem
\begin{equation}
\left\{
\begin{array}
[c]{l}%
-\Delta u+V(x)u=f(u),\\
u\in{H}^{1}(\mathbb{R}^{N}),
\end{array}
\right.  \tag{$P_V$}\label{prob}%
\end{equation}
for $N\geq3$, where $V$ and $f$ may change sign and $f$ is an asymptotically linear nonlinearity. Problem (\ref{prob}) has been studied
extensively in order to reach the most general hypotheses which grant to solve it. Our goal is to expose the essential interplay between the spectrum of the Schr\"odinger operator and the nonlinear term, which enable us to waive  assumptions of decay and differentiability of $V$ and $f$, or even the monotonicity of $f(s)/s$, as those required in \cite{FMM,SS,JT2,CT,MOR}, for instance.

\qquad The new idea is to use the complete knowledge of the spectrum determined by $V$ and discover the nonlinearities which interact appropriately with this spectrum in order to generate a linking geometry that leads to find a solution. Our approach is different from that applied to this problem on a bounded domain $\Omega$, where the spectrum is discrete and the  classical linking geometry is clearly built by choosing an eigenfunction, see \cite{R,CM}. In fact, exploiting spectral properties we present a linking structure by choosing a special direction $e$, which is not necessarily an eigenfunction. Furthermore, due to the lack of monotonicity on the nonlinearity, a new construction of linking level depending on a continuous function $h_0$ is developed (see (\ref{c}) and (\ref{h_0})) in order to succeed in finding a nontrivial critical point to the functional associated to the problem, despite the lack of compactness. This argument is innovative and intricate since it combines the dilation, translation and projections of a positive solution of the limit autonomous problem.  This approach could be adapted to other situations in order to improve results in literature where the monotonicity have been required.

\qquad We assume the following hypotheses of behavior on the nonlinear term:
\medskip\newline
$(f_1)\quad
f\in\mathcal{C}(\mathbb{R})$ is odd, $f(0)=0$, there exists $\displaystyle\lim_{s \to 0}\dfrac{f(s)}{s}=f'(0)$ and 
\[
\dfrac{f(s)}{s} \geq f'(0) \quad \text{for \ all\ } s\in \mathbb{R}^+;
\]
$(f_2)$\quad There exists $a>f'(0)$, such that $\displaystyle\lim_{|s| \to +\infty}\dfrac{f(s)}{s}=a$.
\medskip
\newline

\qquad Defining  
operator $A:= -\Delta + V(x)$,
as an operator of $L^2(\mathbb{R}^N)$ and denoting by $\sigma(A)$ the spectrum, by $\sigma_p(A)$ the point spectrum  and by $\sigma_{ess}(A)$ the essential spectrum of $A$, respectively, we assume the following spectral hypothesis on $V$: 
\medskip
\newline
$(V_1)$\quad $(f'(0), a)\cap \sigma(A)\not= \emptyset$ and $ f'(0), a\notin \sigma_p(A)$;
\medskip
\newline
$(V_2)$\quad $V\in C(\mathbb{R}^N,\mathbb{R})$ and
$\displaystyle\lim_{|x| \to +\infty}V(x) =  V_\infty > f'(0)$.
\medskip


\qquad Assumption $(V_2)$ implies that $A$ is a self-adjoint operator. Moreover, the limit in $(V_2)$ implies that ${\sigma_{ess}(A) = [V_\infty,+\infty)}$, and since  ${f'(0) < V_\infty}$ we conclude that $ f'(0) \notin \sigma_{ess}(A)$, by simplicity, we also suppose $f'(0)\notin\sigma_{p}(A),$ hence $f'(0)\notin \sigma(A)$. In addition, $\sigma(A)\cap(-\infty,V_\infty)$ is the discrete spectrum, composed at most by a sequence of eigenvalues with finite multiplicity.

 \qquad Assumption $(V_1)$ is a kind of crossing hypothesis, since it requires some intersection between the spectrum of $A$ and the interval $(f'(0),a)$. Note that if ${\sigma^+\in (f'(0),a)\cap \sigma(A)}$ is such that $(f'(0),\sigma^+)\cap \sigma(A) = \emptyset$, then $\sigma^+$ is either one of the eigenvalues in the discrete spectrum, or $\sigma^+ = V_\infty$. Moreover, if $\sigma^-<f'(0)$ is such that $(\sigma^-,f'(0))\cap \sigma(A)= \emptyset$ and $\sigma^-$ is either one of those eigenvalues, or $\sigma^- = -\infty$, then $(\sigma^-,\sigma^+)$ is a gap in $\sigma(A)$, which contains $f'(0)$. Furthermore, if $a>V_\infty$, then $a \in \sigma_{ess}(A)$, namely, it is not necessary to require $a \notin \sigma_p(A)$. However, if $\sigma^+ <a<V_\infty$, then we require that $a$ cannot be one of the eigenvalues of $A$.
\begin{figure}[h]
	\centering
	\begin{tikzpicture}[scale=.50]
	\draw[thick, -] (-2, 3) -- (9, 3);
	\draw (-1.2,2.95) node[]{$\bullet$};
	\draw (0.7,2.95) node[]{$\bullet$};
	\draw (0.2,2.95) node[]{$\bullet$};
	\draw (1.8,2.95)node[]{$($};
	\draw (1.8,1.1) node[above]{$\sigma^-$};
	\draw (3.8,1.0) node[above]{$f'(0)$};
	\draw[, -] (3.8, 3.3) -- (3.8,2.7);
	\draw (1.8,2.95) node[]{$\bullet$};
	\draw (5.5,1.1) node[above]{$\sigma^+$};
	\draw (5.5,2.95) node[]{$\bullet$};
	\draw (5.5,2.95)node[]{$)$};
	\draw (6.9,2.95) node[]{$\bullet$};
	\draw (7.5,1.1) node[above]{$a$};
	\draw (8.3,2.95) node[]{$\bullet$};
	\draw[ultra thick,line width=2.5pt] (9,3)--(18,3);
	\draw (9,3) node[]{$[$};
	\draw (9,1.1) node[above]{$V_\infty$};
	\draw [,-] (7.5,3.3) -- (7.5,2.7);
	\draw (15,0) node[above]{\textrm{Case $\sigma^+<a<V_\infty $}};
	\draw (2,5) node[] {\textrm{$\bullet$  (eigenvalues)}};
	\draw (13,5) node[] {\textrm{ (continuous spectrum)}};
	\draw[ultra thick,-] (7.5,5)--(8.5,5);
	\end{tikzpicture}
\end{figure}
\vspace{-0.5cm}
\begin{figure}[h]
	\centering
	\begin{tikzpicture}[scale=.50]
	\draw[thick, -] (-2, 3) -- (7.5, 3);
	\draw (-1.2,2.95) node[]{$\bullet$};
	\draw (0.7,2.95) node[]{$\bullet$};
	\draw (0.2,2.95) node[]{$\bullet$};
	\draw (1.8,2.95)node[]{$($};
	\draw (1.8,1.1) node[above]{$\sigma^-$};
	\draw (3.8,1.0) node[above]{$f'(0)$};
	\draw[, -] (3.8, 3.3) -- (3.8,2.7);
	\draw (1.8,2.95) node[]{$\bullet$};
	\draw (5.5,1.1) node[above]{$\sigma^+$};
	\draw (5.5,2.95) node[]{$\bullet$};
	\draw (5.5,2.95)node[]{$)$};
	\draw (6.8,2.95) node[]{$\bullet$};
	\draw (7.5,1.1) node[above]{$V_\infty$};
	\draw[ultra thick,line width=2.5pt] (7.5,3)--(18,3);
	\draw (7.5,3) node[]{$[$};
	\draw (9,1.1) node[above]{$a$};
	\draw [thick,-] (9,3.3) -- (9,2.7);
	\draw (15,0) node[above]{\textrm{Case $\sigma^+<V_\infty  < a$}};
	\end{tikzpicture}
	\end{figure}
\vspace{-0.5cm}
\begin{figure}[h]
		\begin{tikzpicture}[scale=.50]
	\draw[thick, -] (-2, 3) -- (7.5, 3);
	\draw (-1.2,2.95) node[]{$\bullet$};
	\draw (0.7,2.95) node[]{$\bullet$};
	\draw (0.2,2.95) node[]{$\bullet$};
	\draw (1.8,2.95)node[]{$($};
	\draw (1.8,1.1) node[above]{$\sigma^-$};
	\draw (3.8,1.0) node[above]{$f'(0)$};
	\draw[, -] (3.8, 3.3) -- (3.8,2.7);
	\draw (1.8,2.95) node[]{$\bullet$};
	\draw (7.2,1.1) node[above]{$\sigma^+=V_\infty$};
	\draw[,line width=2.5pt] (7.5,3)--(18,3);
	\draw (7.5,3) node[]{$)$};
	\draw (9.5,1.1) node[above]{$a$};
	\draw [thick,-] (9.5,3.3) -- (9.5,2.7);
	\draw (15,0) node[above]{\textrm{Case $\sigma^+=V_\infty  < a$}};
	\end{tikzpicture}
	\end{figure}


 \qquad It is important to highlight that the relation between $\sigma(A)$ and $f$ is established by the limits of $\dfrac{f(s)}{s}$ at the origin and at infinity, which must be in a spectral gap and after the same spectral gap, respectively. For instance, it is straightforward to verify that the asymptotically linear
model nonlinearity $f(s)=\frac{as^{3} -s }{1+s^{2}}$ satisfies hypotheses $(f_1)-(f_2)$, with $f'(0)=-1 <a$. In this case, as mentioned above, assumptions $(V_1)-(V_2)$ imply that $-1\notin \sigma(A)$, but it is not required that $0\notin \sigma(A)$ as usual, see \cite{CT}. Moreover, hypothesis $(V_1)$ implies that $a > \sigma^+> -1$, but does not require $a>0$ as usual, see \cite{JT}. Finally, we point out that potentials $V$ such that $V(x) \to V_\infty \leq 0$ as $|x|\to +\infty$ are included by hypothesis $(V_2)$, provided that $f'(0)<0$.

\qquad To the best of our knowledge, these assumptions on problem (\ref{prob}) deeply generalize previous works in the literature. Setting
 \[F(s) = \displaystyle\int_{0}^{s}f(t)\;dt\quad \text{and} \quad \hat{F}(s) := f(s)s - 2F(s), \quad \text{for \ all} \quad s \in \mathbb{R}.
\]
our main result is stated as follows.
\medskip
\begin{theorem}\label{main2}
	Assume that $(V_1)-(V_2)$, $(f_1)-(f_2)$ hold with either \newline
	$(f_3)$\quad There exists $\delta>0$ such that $\dfrac{f(s)}{s} \leq V_\infty - \delta$ for all $s \in \mathbb{R}$, 
	\medskip
	\newline
	or 
	\medskip
	\newline
	$(f_3)'$\quad $\hat{F}(s)\geq 0$ and there exists $\delta>0$ such that
	\[
	\dfrac{f(s)}{s}> V_\infty - \delta \implies \hat{F}(s)\geq \delta,
	\]	
	along with
	\medskip
	\newline
	$(V_3)$\quad $V(x)\leq V_\infty.$
	\medskip
	\newline
	Then problem (\ref{prob}) has a nontrivial solution.	
\end{theorem}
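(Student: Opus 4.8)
The plan is to obtain the solution as a critical point of the energy functional
\[
I(u) = \frac{1}{2}\int_{\mathbb{R}^N}\left(|\nabla u|^2 + V(x)u^2\right)dx - \int_{\mathbb{R}^N}F(u)\,dx = \frac{1}{2}\langle Au,u\rangle_{L^2} - \int_{\mathbb{R}^N}F(u)\,dx,
\]
defined on the form domain $E = D(|A|^{1/2})$, which by $(V_2)$ is $H^1(\mathbb{R}^N)$ with an equivalent norm. Since $f'(0)\notin\sigma(A)$ lies in the spectral gap $(\sigma^-,\sigma^+)$, I would split $E = E^-\oplus E^+$ by the spectral projections of $A$ associated to $(-\infty,f'(0))$ and $(f'(0),+\infty)$. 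Because $\sigma(A)\cap(-\infty,V_\infty)$ is discrete and $f'(0)<V_\infty$, the subspace $E^-$ is finite dimensional, and the form $u\mapsto\langle(A-f'(0))u,u\rangle$ is negative definite on $E^-$ and positive definite with a gap on $E^+$. This is the splitting that produces the linking geometry near the origin, where $I(u)\approx\tfrac12\langle(A-f'(0))u,u\rangle$.

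Next I would verify the linking. On a small sphere $S_\rho=\{u\in E^+:\|u\|=\rho\}$ the gap estimate on $E^+$ together with $\lim_{s\to0}f(s)/s=f'(0)$ yields $I\geq\alpha>0$. For the complementary side I must select the special direction $e\in E^+$: since $(f'(0),a)\cap\sigma(A)\neq\emptyset$ by $(V_1)$, the spectral theorem furnishes $e$ with $\langle Ae,e\rangle<a\,\|e\|_{L^2}^2$, an exact eigenfunction when $\sigma^+$ is an eigenvalue and an approximate spectral vector (a Weyl sequence element) when $\sigma^+=V_\infty$ sits at the bottom of the essential spectrum. Using $F(s)\approx\tfrac{a}{2}s^2$ from $(f_2)$ and the $A$-orthogonality of $E^-$ and $e$, one checks that $\langle(A-a)\cdot,\cdot\rangle$ is negative on $E^-\oplus\mathbb{R}e$, whence $I(v+te)\to-\infty$ there. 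Applying an abstract linking theorem of Rabinowitz--Schechter type to the box $Q=\{v+te:v\in E^-,\ t\geq0,\ \|v+te\|\leq R\}$ then produces a Cerami sequence $(u_n)$ at a minimax level $c\geq\alpha$; the Cerami variant is essential here, since the asymptotic linearity of $(f_2)$ rules out the Ambrosetti--Rabinowitz condition.

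The following step is boundedness of $(u_n)$. Arguing by contradiction, I would normalize $w_n=u_n/\|u_n\|$; from $(1+\|u_n\|)\|I'(u_n)\|\to0$ and $f(s)/s\to a$ one extracts, in the limit, a nontrivial $w$ solving $(A-a)w=0$ weakly, forcing $a\in\sigma_p(A)$ and contradicting $(V_1)$. Hypothesis $(f_3)'$, through $\hat{F}\geq0$ and the identity $I(u)-\tfrac12\langle I'(u),u\rangle=\tfrac12\int\hat{F}(u)$, is what keeps the nonlinear term from degenerating in this argument. With $(u_n)$ bounded we have $u_n\rightharpoonup u$ in $E$ and $u_n\to u$ in $L^2_{loc}$, and passing to the limit in $I'(u_n)\to0$ shows that $u$ is a weak solution of (\ref{prob}).

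The decisive and hardest step is to prove $u\neq0$, and here the two alternatives diverge. Under $(f_3)$ the slope satisfies $a\leq V_\infty-\delta$, so the autonomous problem at infinity $-\Delta u+V_\infty u=f(u)$ has a coercive, strictly positive functional and no nontrivial solution; a Lions-type vanishing/nonvanishing dichotomy then excludes escape of mass, giving strong convergence and $I(u)=c>0$, hence $u\neq0$. Under $(f_3)'$ with $(V_3)$ the slope $a$ may enter the essential spectrum, the limit problem does admit a ground state at a level $c_\infty>0$, and the genuine obstacle becomes the strict estimate $c<c_\infty$. This is precisely where I expect the main difficulty: I would build the competitor path in $Q$ from a positive ground state of the limit problem by combining its dilation, translation, and projection onto $E^-\oplus\mathbb{R}e$ --- the role of the continuous function $h_0$ in (\ref{c})--(\ref{h_0}) --- and exploit $(V_3)$ (so that $I\leq I_{\infty}$ on translates) to obtain $c\leq c_\infty$, the delicate construction forcing strictness despite the absence of monotonicity of $f(s)/s$. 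A splitting lemma (Brezis--Lieb together with concentration-compactness) then shows that $u=0$ would split off a full copy of the limit ground state, yielding $c\geq c_\infty$, a contradiction. Therefore $u\neq0$ is the sought nontrivial solution.
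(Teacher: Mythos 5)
Your overall architecture matches the paper's: the spectral splitting $E=E^-\oplus E^+$ across the gap containing $f'(0)$, the special direction $e$ with $\langle Ae,e\rangle<a\|e\|_{L^2}^2$, boundedness of Cerami sequences via the normalized sequence and $a\notin\sigma_p(A)$ (plus the $\hat F$ identity under $(f_3)'$), and nontriviality by comparison with the autonomous problem at infinity. But there is a genuine gap at the decisive step, and it is exactly the point the paper is built around. You produce the Cerami sequence with ``an abstract linking theorem of Rabinowitz--Schechter type'' applied to $Q$; in such theorems the minimax class is $\Gamma_{Id}=\{h\in C(Q,E):\ h|_{\partial Q}=Id\}$, so the level $c$ is an infimum over maps that restrict to the \emph{identity} on $\partial Q$. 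Later, to get $c\leq c_\infty$ and its strict version, you test $c$ against the surface built from the dilated and translated ground state of the limit problem --- in the paper's notation, $h_0(Rte+v)=\gamma^X(t)+|v|$. That surface does \emph{not} restrict to the identity on $\partial Q$ (it sends $\bar{B}_R\cap E^-$ to $|v|$ and the segment $\{Rte\}$ to the dilation path $\gamma^X(t)$), so it is not admissible in $\Gamma_{Id}$ and yields no upper bound on your $c$ at all. Nor can you repair this by deforming the ground-state surface into an identity-boundary competitor: doing so is precisely where earlier works (e.g.\ \cite{MOR}) needed monotonicity of $f(s)/s$, which is unavailable here. The paper's resolution is Theorem \ref{AR}: a new linking theorem whose admissible class is $\Gamma=\{h:\ h|_{\partial Q}=h_0\}$ for a prescribed non-identity $h_0$, with the linking property restored through the Brouwer degree condition $(iv)$ of $(I_2)$ (uniqueness of $w\in h_0(Q)\cap\partial B_\rho\cap X$ and $\deg(h_0,\text{int}(Q),w)\neq0$). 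Then $h_0\in\Gamma$ by construction, so the chain $c\leq\sup_{Q}I(h_0)\leq\sup_{t}I(\gamma^X(t))<\sup_{t}I_\infty(\gamma(t))=I_\infty(\tilde u)\leq c$ closes the contradiction. Your proposal silently assumes this generalized minimax framework while invoking only the classical one; without proving such a theorem (in particular, that every $h$ with $h|_{\partial Q}=h_0$ still meets $\partial B_\rho\cap X$), the final contradiction does not go through.

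Three smaller points. First, the correct dichotomy is whether the limit problem (\ref{nce2}) has a nontrivial solution, not $(f_3)$ versus $(f_3)'$: under $(f_3)'$ the limit problem may have no solution, and then $h_0=Id$ works and nontriviality follows as in your $(f_3)$ argument. Second, your boundedness sketch extracts a ``nontrivial'' limit $w$ without addressing the vanishing alternative --- this is where $(f_3)_0$/$(f_3)'_0$ actually enter, via $\int\hat F_0(u_n)\,dx=2I(u_n)-I'(u_n)u_n\to2c$ against the divergence forced by $(f_3)'_0$ --- and, when the translations $(y_n)$ are unbounded, the contradiction is that $-\Delta+V_{\infty}$ has no eigenvalues, not that $a\in\sigma_p(A)$. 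Third, the degenerate case $V\equiv V_\infty$ must be treated separately, since there $I=I_\infty$ and the strict inequality you rely on fails; the paper concludes in that case directly from Proposition \ref{P4.1}.
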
	
\medskip

	 \qquad Hypotheses $(f_3)$ or $(f_3)'$ were first introduced by L. Jeanjean and K. Tanaka in \cite{JT} and are required for obtaining the boundedness of Cerami sequences for the functional associated to $(P_V)$ in the variational approach. Note that hypothesis $(f_3)$ does not make sense when $V_\infty =\sigma^+ <a$. Indeed, since $f(s)/s\to a$ as $|s| \to +\infty$, then there is no $\delta>0$ such that $f(s)/s<a - \delta$ for $|s|$ sufficiently large. On the other hand, $(f_3)$ may happen when $V_\infty > a >\sigma^+$. Furthermore, we observe that if there exists a sequence of eigenvalues in the discrete spectrum converging to $V_\infty$, the bottom of the essential spectrum, then we surely have $\sigma^+< V_\infty$, else, both cases $\sigma^+<V_\infty$ and $\sigma^+ = V_\infty$ may happen.

\qquad Influenced by the existence results obtained in \cite{JT,JT2} by L. Jeanjean and K. Tanaka for definite Schr\"odinger equations by applying Mountain Pass Theorem, we analyze how to treat the existence of solution to problem $(P_V)$ under the most general assumptions, as for indefinite potentials, for instance. In \cite{JT} the authors worked with a asymptotically linear problem satisfying
\[
f'(0)=0< \inf\sigma(-\Delta+V(x))<a,
\]
whereas in \cite{JT2} they treated the superlinear case requiring
\[
f'(0) < \inf\sigma(-\Delta + V(x)) < a = +\infty.
\]
Since we are interested in the asymptotically linear case, we generalize theirs assumptions requiring 
\begin{equation}\label{cross}
f'(0) < \sigma^+ < a < +\infty, \quad \text{for \ some} \quad \sigma^+ \in \sigma(-\Delta +V(x)),
\end{equation}
which includes both previous assumptions. Hence, in what concerns to release the interaction between the spectrum and the nonlinear term, our Theorem \ref{main2} is a wide generalization to Theorems 1.1 and 1.2 in \cite{JT}, since we only require the crossing assumption in (\ref{cross}). Regarding \cite{JT2}, although $\inf\sigma(-\Delta +V(x))$ may be negative, since $f'(0)$ is below this infimum, by making a translation in the problem, it becomes a definite problem, hence it is also possible to obtain a Mountain Pass geometry, see also \cite{LSW}. Thus, these papers work with definite operators, which allow the authors to find a positive solution to the problem. Since in our case, we treat especially indefinite operators, a solution $u$ to $(P_V)$ is an eigenvector associated to the eigenvalue zero of the following Schr\"odinger operator
\[
-\Delta + V(x) - \dfrac{f(u)}{u},
\]  
whose infimum of the spectrum is negative. Therefore, such a solution must be sign-changing as $\inf(-\Delta+V)<0$, for details see \cite{P}.

 \qquad Under our assumptions, for the indefinite case, Mountain Pass Theorem is not applicable, hence our challenge is to set up a suitable linking geometry to problem ($P_V$), which allows us to find a nontrivial critical point to the functional associated by the variational methodology. Following the approach in \cite{JT}, we look for information in the limit problem ``at infinity'', however, due to the lack of compactness our arguments do not work for a usual linking structure. Establishing an appropriate linking geometry depends strongly on the hypotheses of growth of $f(s)/s$, since we do not have assumptions of monotonicity, as well as \cite{JT}. Thus, we make use of classical results from \cite{BL} by H. Berestycki and P. Lions and we also lay hand of a smart construction by L. Jeanjean and K. Tanaka in Proposition 4.1 of \cite{JT}. Thereby, we get a proper linking structure to overcome the lack of compactness.
 Nevertheless, another difficulty is to guarantee that, in fact, this linking geometry produces a Cerami sequence for the associated functional. For this purpose, we need to prove a linking theorem, which do generalize the linking theorems in \cite{R,LW}, since our linking structure is not covered by these previous results.
 
 \qquad In \cite{MOR} problem $(P_V)$ was treated by L. Maia, J. Oliveira Junior and R. Ruviaro, also in the asymptotically linear case, autonomous at infinity. However, their estimates required more assumptions of decay, differentiability, growth and monotonicity, which enabled them to apply the linking theorem developed by G. Li and C. Wang in \cite{LW} and use the monotonicity to get the nontrivial critical point at the end. This argument cannot be employed by us, due to the lack of monotonicity. Furthermore, in \cite{MOR} the authors require $0\notin \sigma(-\Delta+V(x))$ and the limit of $V$ being positive, as usual in the literature, such hypotheses are loosened by us in this paper.

\qquad Finally, we would like to mention the paper \cite{LSW} by Z. Liu, J. Su and T. Weth, where they worked under general assumptions on the potential and nonlinear term, including the asymptotically linear case. Nevertheless, they required a non-crossing assumption, which in our case would mean 
\begin{equation}\label{noncross}
[f'(0),a]\cap \sigma_{ess}(A) \subset \left[\inf_{s\in \mathbb{R}\setminus\{0\}}\dfrac{f(s)}{s},\sup_{s\in \mathbb{R}\setminus\{0\}}\dfrac{f(s)}{s}\right]\cap\sigma_{ess}(A) = \emptyset \quad \text{and} \quad  a\notin \sigma(A).
\end{equation}
 Hence, their nonlinear term did not cross the essential spectrum, then they were able to get compactness and by making a translation in their problem they could apply Mountain Pass Theorem. We observe that hypothesis $(V_1)$ requires less then condition (\ref{noncross}), since we just assume $a\notin \sigma_p(A)$ and it does not matter how $f(s)/s$ behaves between zero and infinity, namely it may cross the essential spectrum or not. This shows, for instance, that a nonlinearity satisfying $(f_3)'$ cannot be tackled by their results, neither the cases $\sigma^+\leq V_\infty <a$ and $\sigma^+< V_\infty = a$. 

\qquad Following ideas found in \cite{JT2,LSW,MS4}, by making a translation in our problem, we observe that $u \in H^1(\mathbb{R}^N)$ is a solution to (\ref{prob}) if and only if $u$ is a solution to
\begin{equation}
\left\{
\begin{array}
[c]{l}%
-\Delta u+V_0(x)u=f_0(u),\\
u\in{H}^{1}(\mathbb{R}^{N}),
\end{array}
\right.  \tag{$P_{V_0}$}\label{prob_0}%
\end{equation}
where 
\begin{eqnarray}\label{def}
\quad \quad \quad \quad V_0(x) &:=& V(x)-f'(0)\nonumber\\
\quad \quad \quad \quad f_0(s) &:=& f(s) - f'(0)s, \; \text{for \ all} \; s\in \mathbb{R}.
\end{eqnarray}
\medskip
\newline
Hence, we are going to prove Theorem \ref{main2} by dealing with $(\ref{prob_0})$, which is more easygoing to treat, since it satisfies next hypotheses $(V_1)_0-(V_2)_0$, ${(f_1)_0-(f_2)_0}$ as a consequence of $(V_1)-(V_2),$ ${(f_1)-(f_2)}$ and either $(f_3)_0$ as a consequence of $(f_3)$, or $(f_3)'_0$ along with $(V_3)_0$ as a consequence of $(f_3)'$ along with $(V_3)$. Setting
 \[
F_0(s):=\displaystyle\int_{0}^{s}f_0(t)\,dt \quad  \text{and} \quad  \hat{F}_0(s) := f_0(s)s - 2F_0(s) = \hat{F}(s) \quad \text{for \ all} \quad s \in \mathbb{R},
\] we state these assumptions as follows.
\medskip\newline
$(f_1)_0\quad
f_0\in\mathcal{C}(\mathbb{R})$ is odd, $f_0(0)=0$, there exists $\displaystyle\lim_{s \to 0}\dfrac{f_0(s)}{s}= 0$ and ${f_0(s)} \geq 0$ for all $s\in \mathbb{R}^+;$
\medskip\newline
$(f_2)_0$\quad Setting $a_0:= a - f'(0)$, it holds 
\[
\displaystyle\lim_{|s| \to +\infty}\dfrac{f_0(s)}{s}=a_0;
\] 
\medskip
$(f_3)_0$\quad There exists $\delta>0$ such that  $\dfrac{f_0(s)}{s}\leq V_\infty - f'(0) - \delta =: V_{0,\infty} - \delta$ for all $s \in \mathbb{R}$;
\medskip
\newline
$(f_3)'_0$\quad $\hat{F}_0(s)\geq0$ and there exists $\delta>0$ such that
\[
\dfrac{f_0(s)}{s}> V_{0,\infty} -\delta \implies \hat{F}_0(s)\geq \delta.
\]

\qquad Defining $A_0 := A - f'(0) =  -\Delta + V_0(x)$ and denoting the spectrum of $A_0$ by $\sigma(A_0)$,
\bigskip
\newline
$(V_1)_0$\quad $\big(0, a_0\big)\cap \sigma(A_0)\not= \emptyset$ and ${0,  a_0\notin \sigma_p(A_0)}$ with ${(\sigma^- - f'(0),\sigma^+ - f'(0))}$ a spectral gap of $\sigma(A_0)$ such that $\sigma^+- f'(0) \in (0,a_0)\cap\sigma(A_0)$. 
\medskip
\newline
$(V_2)_0$\quad $V_0 \in C(\mathbb{R}^N,\mathbb{R})$ and $\displaystyle\lim_{|x| \to +\infty}V_0(x) = V_{0,\infty}>0$;
\newline
$(V_3)_0$\quad $V_0(x)\leq V_{0,\infty}.$
\medskip
\newline

\qquad Theorem \ref{main2} is going to be proved by applying variational methods. Let us briefly
highlight some technical details. Defining the Hilbert space
$E:=\Big(H^1(\mathbb{R}^N), ||\cdot||\Big)$, where $||\cdot||$ is the norm induced by operator $A_0$ and considering $\{\mathcal{E}(\lambda): \lambda \in \mathbb{R}\}$ as the spectral family of operator $A_0$, we set $E^+ \subset E$ as the subspace given by $E^+ := \Big(I - \mathcal{E}(0)\Big)E$, where $A_0$ is positive definite and its complement ${ E^- := \mathcal{E}(0)E}$, with $E^-$ the subspace where $A_0$ is negative definite, respectively, hence $E = E^+  \oplus E^-$. In view of $(V_1)_0$ there exists ${\big(\sigma^- - f'(0), \sigma^+-f'(0)\big)}$, which is a spectral gap of $A_0$, with ${\sigma^- - f'(0)<0< \sigma^+-f'(0)}$, then by the spectral family definition, one has
\begin{equation}\label{e00}
\int_{\mathbb{R}^N}\Big(|\nabla u^+(x)|^2+ V_0(x)(u^+(x))^2\Big)\;dx \geq \Big(\sigma^+- f'(0)\Big)\int_{\mathbb{R}^N}(u^+(x))^2\;dx,
\end{equation}
for all $u^+ \in E^+$ and
\begin{equation}\label{e000}
-\int_{\mathbb{R}^N}\Big(|\nabla u^-(x)|^2+ V_0(x)(u^-(x))^2\Big)\;dx \geq \Big(f'(0) - \sigma^-\Big) \int_{\mathbb{R}^N}(u^-(x))^2\;dx,
\end{equation}
for all $u^- \in E^-.$ 
Furthermore, since $f'(0)\notin \sigma(A)$, then $0 \notin \sigma(A_0)$ and $\ker(A_0)= \{0\}$ is the trivial subspace of $E$, 
hence the following inner product is well defined and we are focused on looking for a solution to (\ref{prob_0}) on the Hilbert space $E$ endowed with this suitable inner product

\begin{equation}\label{IP}
\big( u,v \big) = \left\{
\begin{array}{lllll}
\quad \displaystyle\int_{\mathbb{R}^N}\Big(\nabla u(x) \nabla v(x) + V_0(x)u(x)v(x)\Big)dx = (A_0u,v)_{L^2(\mathbb{R}^N)}  \quad   \quad \quad \quad \text{if} \ \ u, v \in E^+, \\
\\
\quad -\displaystyle\int_{\mathbb{R}^N}\Big(\nabla u(x) \nabla v(x) + V_0(x)u(x)v(x)\Big)dx = -(A_0u,v)_{L^2(\mathbb{R}^N)} \quad    \ \quad\text{if} \ \ u,v \in E^-,\\
\\
\quad 0 \quad \quad   \quad \ \quad \quad \quad \quad \quad \quad \quad \quad \quad \quad \quad  \quad  \quad \quad \quad \quad \quad \quad \quad \quad  \text{if} \ \ u \in E^i, \ v \in E^j, i\not=j,
\end{array}
\right.
\end{equation}
and the corresponding norm $||u||^2 := (u,u)$ for all $u \in E$, which is equivalent to the standard norm in $H^1(\mathbb{R}^N)$, see \cite{CT}, for instance.

\begin{remark}\label{r4}
Observe that hypothesis $(V_1)_0$ is sufficient to guarantee equivalence of the refereed norms, in view of (\ref{e00})-(\ref{e000}) and that $0 \notin \sigma(A_0)$. Actually, it is only necessary $0 \notin \sigma_{ess}(A_0)$, namely, we could have $0 \in \sigma_p(A_0)$. However, we could not weaken this assumption allowing $f'(0) \in \sigma_{ess}(A)$, namely, $0 \in \sigma_{ess}(A_0)$. In fact, if $0 \in \sigma_{ess}(A_0)$, then $\ker(A_0)$ is infinite dimensional and the inner product in (\ref{IP}) is necessarily not equivalent to the standard inner product in $H^1(\mathbb{R}^N)$.
\end{remark}

\qquad For the variational approach, we associate to (\ref{prob_0}) the functional $I:E\to \mathbb{R}$ given by
\begin{equation}\label{I}
I(u) = \dfrac{1}{2}\Big(||u^+||^2 -||u^-||^2 \Big) - \int_{\mathbb{R}^N}F_0(u(x))\,dx,
\end{equation}
which is indefinite and belongs to $C^1(E,\mathbb{R})$ in view of the previous hypotheses. We recall that $u$ is a weak solution to (\ref{prob_0}) if it is a critical point of $I$, namely, if
\[
I'(u)v = (u^+,v^+) - (u^-,v^-) - \int_{\mathbb{R}^N}f_0(u(x))v(x)\;dx = 0, \quad \forall \; v \in E. 
\]
Moreover, $(u_n)\subset E$ is called a Cerami sequence for $I$ if
\[
\displaystyle\sup_n|I(u_n)| <+\infty \quad \text{and} \quad ||I'(u_n)||_{E'}(1 + ||u_n||)\to 0 \quad \text{as} \quad n\to+\infty,
\]
and $(u_n)$ is called a $(C)_c$ sequence, or a Cerami sequence on the level $c$, if besides that it satisfies $I(u_n)\to c$ as $n\to +\infty$.

\qquad Our goal is to prove and apply an abstract linking theorem, which is going to be appropriate for our problem, adapting the linking results presented in \cite{LW} due to G. Li and C. Wang, and in \cite{R}, due to P. Rabinowitz, in order to provide a Cerami sequence for functional $I$ associated to $(P_{V_0})$ on a positive level $c \in \mathbb{R}$. Indeed, our abstract result is stated as follows. 

 \begin{theorem}[Abstract Linking Theorem]\label{AR}
	Let $E$ be a real Banach space with $E=V\oplus X$, where $V$ is finite dimensional. Suppose there exist real constants $R> \rho>0$, $\alpha>\omega$ and there exists an $e \in \partial B_1\cap X$ such that $I\in C^1(E,\mathbb{R})$ satisfies,
	\bigskip
	\newline
	$(I_1)$\quad $I|_{\partial B_\rho\cap X}\geq \alpha$;
	\medskip
	\newline
	$(I_2)$\quad Setting $Q:= (\bar{B}_R \cap V)\oplus \{re :0\leq r\leq R\}$, there exists an $h_0\in C(Q,E)$ such that
	\medskip 
	\newline
	$(i)\quad\displaystyle\sup_{u\in Q}I(h_0(u))<+\infty$,
	\newline
	$(ii)\quad\displaystyle\sup_{u\in \partial Q}I(h_0(u)) = \omega,$ 
	\medskip
	\newline
	$(iii)\quad h_0(\partial Q)\cap (\partial B_\rho \cap X) = \emptyset,$
	\medskip
	\newline
	$(iv)\quad$There exists a unique $w \in h_0(Q)\cap(\partial B_\rho \cap X)$ and $\deg(h_0,\text{int}(Q),w)\not=0$. 	
\medskip
\newline
Then $I$ possess a Cerami sequence on a level $c\geq \alpha$, which can be characterized as
\begin{equation}
\label{c}
c:= \inf_{h\in\Gamma}\max_{u \in Q} I(h(u)),
\end{equation}
where $\Gamma:= \{h \in C(Q,E):h|_{\partial Q} = h_0\}$.
\end{theorem}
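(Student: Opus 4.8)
The plan is to prove Theorem \ref{AR} by combining a degree-theoretic \emph{linking (intersection) argument}, which shows that the minimax value $c$ is well defined and bounded below by $\alpha$, with a \emph{Cerami-type quantitative deformation lemma}, which produces the desired sequence. First I would observe that $h_0\in\Gamma$, since $h_0|_{\partial Q}=h_0$ trivially; hence by $(I_2)(i)$ the value
$c=\inf_{h\in\Gamma}\max_{u\in Q}I(h(u))$ is finite, with $c\le\sup_{u\in Q}I(h_0(u))<+\infty$. The heart of the matter is the geometric fact that for \emph{every} $h\in\Gamma$ the image $h(Q)$ must meet the set $S:=\partial B_\rho\cap X$.

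To establish this linking I would pass to a finite-dimensional degree. Since $V$ is finite dimensional and $Q\subset W:=V\oplus\mathrm{span}(e)$ with $\dim W=\dim V+1=\dim Q$, writing $P:E\to V$ for the projection along $X$ and setting $\Psi_h(u):=\big(Ph(u),\,\|h(u)\|-\rho\big)\in V\times\mathbb{R}$, a zero of $\Psi_h$ in $\mathrm{int}(Q)$ forces $h(u)\in X$ and $\|h(u)\|=\rho$, i.e. $h(u)\in S$. Because every $h\in\Gamma$ agrees with $h_0$ on $\partial Q$, the affine homotopy $t\mapsto th+(1-t)h_0$ equals $h_0$ on $\partial Q$ for all $t$, so $\Psi_{th+(1-t)h_0}|_{\partial Q}=\Psi_{h_0}$; by $(iii)$ we have $h_0(\partial Q)\cap S=\emptyset$, hence $\Psi_{h_0}\ne 0$ on $\partial Q$ and the Brouwer degree $\deg(\Psi_{\cdot},\mathrm{int}(Q),0)$ is admissible along the whole homotopy. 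Homotopy invariance together with the nonvanishing hypothesis $(iv)$ (read as the nonvanishing of this reduced degree, with the uniqueness of $w$ guaranteeing the identification of the relevant target point) then yields $\deg(\Psi_h,\mathrm{int}(Q),0)\ne 0$, so $h(Q)\cap S\ne\emptyset$. Combined with $(I_1)$ this gives $\max_{u\in Q}I(h(u))\ge\alpha$ for all $h\in\Gamma$, whence $c\ge\alpha$.

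For the second half I would invoke a quantitative deformation lemma tailored to the Cerami condition. Arguing by contradiction, assume $I$ has no Cerami sequence at level $c$; then there exist $\varepsilon,\delta>0$, which we may further take with $2\varepsilon<\alpha-\omega$, such that $\|I'(u)\|_{E'}(1+\|u\|)\ge\delta$ whenever $|I(u)-c|\le 2\varepsilon$. The lemma produces a continuous $\eta:E\to E$ that fixes every point with $I(u)\notin[c-2\varepsilon,c+2\varepsilon]$ and satisfies $\eta(I^{c+\varepsilon})\subset I^{c-\varepsilon}$, where $I^{d}:=\{u\in E:I(u)\le d\}$. Picking $h\in\Gamma$ with $\max_{u\in Q}I(h(u))\le c+\varepsilon$ and forming $\eta\circ h$, I would verify $\eta\circ h\in\Gamma$: on $\partial Q$ one has $h=h_0$ and, by $(ii)$, $I(h_0(u))\le\omega<\alpha-2\varepsilon\le c-2\varepsilon$, so $\eta$ fixes these points and $\eta\circ h=h_0$ on $\partial Q$. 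But then $\max_{u\in Q}I(\eta(h(u)))\le c-\varepsilon<c$, contradicting the definition of $c$ as an infimum over $\Gamma$. Therefore a Cerami sequence at the level $c\ge\alpha$ must exist.

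The step I expect to be the main obstacle is the degree-theoretic linking. One must make the degree $\deg(h_0,\mathrm{int}(Q),w)$ rigorous for a map into the infinite-dimensional space $E$ through the finite-dimensional reduction by $P$ and the radial coordinate, confirm that the reduced degree of $\Psi_{h_0}$ at $0$ genuinely coincides with the hypothesized quantity in $(iv)$, and check that the uniqueness of $w$ and condition $(iii)$ legitimately keep the homotopy admissible for an \emph{arbitrary} continuous $h_0$. This is precisely where the present nonstandard linking geometry departs from the classical results of Rabinowitz and of Li--Wang, so the compatibility of the general map $h_0$ with the degree machinery is the delicate point; by contrast, the Cerami deformation argument is routine once the separation $\omega<\alpha\le c$ has been secured.
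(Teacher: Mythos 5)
Your proposal is correct, and its central geometric step is essentially the paper's own argument: the paper defines $H(u):=\|(1-P)h(u)\|e+P(h(u))$ with target $\rho e$, which has exactly the same zero set as your $\Psi_h$ with target $0$; the Brouwer-degree property that the degree depends only on boundary values replaces your affine homotopy $t\mapsto th+(1-t)h_0$ (these are the same fact, since $h=h_0$ on $\partial Q$ and $(iii)$ keeps the boundary values away from the target); and the identification of this reduced degree with the quantity postulated in $(iv)$, via the uniqueness of $w$, is carried out there in the same slightly informal way you anticipate. So the ``delicate point'' you flag is genuinely present in the paper as well, and is resolved exactly as you suggest: $(iv)$ is in effect read (and, in the applications, verified) as nonvanishing of the reduced degree. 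The genuine difference lies in how the Cerami sequence is extracted once $+\infty>c\geq\alpha>\omega$ is established. The paper uses no deformation lemma at all: it checks condition (\ref{s2e1}) with $\Gamma_0=\{h_0\}$, $Q_0=\partial Q$ (this is where the separation (\ref{s2e4}) enters, playing the role of your requirement $2\varepsilon<\alpha-\omega$), and then quotes the Ekeland-type results of Li--Wang (Proposition \ref{P2.8} and Corollary \ref{C2.9}, from \cite{LW}), which for every $h$ almost realizing $c$ produce points $u$ with $c-2\varepsilon\leq I(u)\leq c+2\varepsilon$ and $(1+\|u\|)\,\|I'(u)\|_{E'}<8\varepsilon/\delta$, whence the $(C)_c$ sequence. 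You instead argue by contradiction through a Cerami-type quantitative deformation lemma. Both routes are standard and interchangeable here; the paper's buys brevity and avoids constructing the weighted pseudo-gradient flow, while yours is self-contained in spirit but leaves one ingredient to be supplied: the deformation lemma adapted to the Cerami weight $(1+\|u\|)$ is not the classical one in Rabinowitz's book and should be proved or cited precisely (it goes back to Bartolo--Benci--Fortunato), whereas everything else in your argument is complete.
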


\begin{remark}
We observe that, if $h_0 = Id,$ the identity map, then Theorem \ref{AR} is reduced to Theorem 2.10 in \cite{LW}. In this sense, our abstract result is a generalization for the abstract results developed in \cite{LW} and \cite{R}. In order to apply this result under our assumptions in (\ref{prob_0}), we are going to choose a suitable $h_0$, which sometimes cannot be the identity map.
\end{remark}
\medskip

\qquad Next section is devoted to prove Theorem \ref{AR}. Subsequently, in section $3$ we are going to apply Theorem \ref{AR} and get a $(C)_c$ sequence for $I$, named $(u_n)$. In virtue of the assumptions on $f$, in section $4$ we are going to be able to show that $(u_n)$ is bounded. Finally, section $5$ is designated to obtain a nontrivial critical point for $I$, which is going to be a nontrivial solution to (\ref{prob_0}). More specifically, by means of an indirect argument, lying down on information about the autonomous problem at infinity, we are going to provide a nontrivial critical point for $I$, which is going to be the weak limit of the $(C)_c$ sequence obtained for $I$ by means of Theorem \ref{AR}.

\section{An Appropriate Abstract Linking Result}

\qquad In this section we aim  to prove Theorem \ref{AR} and to do so, we first recall two auxiliary results, which can be found in \cite{LW}. For sake of simplicity we change their notation to suit this result in our current notation.

\begin{proposition}[Proposition 2.8 \cite{LW}]\label{P2.8}
Let $E$ be a Banach space and $Q$ a metric space. Let $Q_0$ be a closed subspace of $Q$ and $\Gamma_0 \subset C(Q_0,E)$. Define 
\[
\Gamma := \{h\in C(Q,E):h|_{Q_0}\in\Gamma_0\}.
\]
If $I\in C^1(E,\mathbb{R})$ satisfies
\begin{equation}\label{s2e1}
+\infty > c := \inf_{h\in\Gamma}\sup_{u \in Q}I(h(u)) > \omega := \sup_{h_0\in\Gamma_0}\sup_{u\in Q_0}I(h_0(u)),
\end{equation}
then for every $\varepsilon\in \left(0,\dfrac{c-\omega}{2}\right)$, $\delta>0$ and $h\in \Gamma$ such that
\[
\sup_{u\in Q}I(h(u))\leq c+ \varepsilon,
\]
there exists $u\in E$ such that
\medskip
\newline
$a) \quad c-2\varepsilon \leq I(u)\leq c+2\varepsilon$,
\medskip
\newline
$b) \quad \text{dist}(u,h(Q)) \leq 2\delta,$
\newline
$c) \quad (1+||u||_E)||I'(u)||_{E'}<\dfrac{8\varepsilon}{\delta}$.
\end{proposition}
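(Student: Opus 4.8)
The plan is to argue by contradiction: assuming the conclusion fails, I would deform the given $\varepsilon$-optimal map $h$ into a new admissible map whose maximal $I$-value drops strictly below $c$, which is impossible since $c=\inf_{\Gamma}\sup_Q I$ by \eqref{s2e1}. Concretely, fix $\varepsilon\in(0,(c-\omega)/2)$, $\delta>0$ and $h\in\Gamma$ with $\sup_{Q}I\circ h\le c+\varepsilon$, and suppose no $u\in E$ satisfies $a)$--$c)$ simultaneously. Then on the set
\[
A:=\{u\in E: c-2\varepsilon\le I(u)\le c+2\varepsilon\}\cap\{u\in E:\operatorname{dist}(u,h(Q))\le 2\delta\}
\]
one has $(1+\|u\|_E)\|I'(u)\|_{E'}\ge 8\varepsilon/\delta$, so in particular $I'\neq 0$ on $A$ and a locally Lipschitz pseudo-gradient field $V$ for $I$ (with $\|V(u)\|\le 2\|I'(u)\|$ and $\langle I'(u),V(u)\rangle\ge\|I'(u)\|^2$) exists on a neighborhood of $A$.

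Next I would introduce the inner core $B:=\{c-\varepsilon\le I\le c+\varepsilon\}\cap\{\operatorname{dist}(\cdot,h(Q))\le\delta\}$ and a locally Lipschitz cut-off $g:E\to[0,1]$ with $g\equiv 1$ on $B$ and $g\equiv 0$ off $A$, for instance $g=\operatorname{dist}(\cdot,E\setminus A)/\big(\operatorname{dist}(\cdot,E\setminus A)+\operatorname{dist}(\cdot,B)\big)$. The deformation is the flow $\eta(t,\cdot)$ solving
\[
\tfrac{\partial}{\partial t}\eta=-\,g(\eta)\,(1+\|\eta\|)\,\frac{V(\eta)}{\|V(\eta)\|},\qquad \eta(0,\cdot)=\mathrm{Id},
\]
where the factor $(1+\|\eta\|)$ is precisely the weight matching the Cerami-type quantity in $c)$. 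Since the field is bounded by $1+\|\eta\|$, Gronwall gives global existence, and the pseudo-gradient inequalities yield the descent estimate
\[
\tfrac{d}{dt}I(\eta)=-g(\eta)(1+\|\eta\|)\frac{\langle I'(\eta),V(\eta)\rangle}{\|V(\eta)\|}\le-\tfrac12\,g(\eta)\,(1+\|\eta\|)\,\|I'(\eta)\|\le-\tfrac{4\varepsilon}{\delta}\,g(\eta),
\]
the last inequality holding wherever $\eta\in A$, by the contradiction hypothesis.

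I would then set $\bar h:=\eta(T,\cdot)\circ h$ for a time $T\asymp\delta$ calibrated to the constant $8\varepsilon/\delta$, and check $\bar h\in\Gamma$. Since $h|_{Q_0}\in\Gamma_0$ forces $I(h(u))\le\omega<c-2\varepsilon$ for $u\in Q_0$ (here $\varepsilon<(c-\omega)/2$ is used), the points of $h(Q_0)$ lie outside $A$, where $g\equiv 0$; hence the flow fixes them and $\bar h|_{Q_0}=h|_{Q_0}\in\Gamma_0$, so $\bar h\in\Gamma$. The aim for the remaining $u\in Q$ is to show, by tracking $t\mapsto\eta(t,h(u))$, that every such point ends at level strictly below $c$: a trajectory that ever reaches level $c-2\varepsilon$ stays below $c$ afterwards (the flow is $I$-nonincreasing), while a trajectory that remains in the core is driven below $c$ within time $T\asymp\delta$ by the displayed descent estimate. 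Granting this, $\sup_{Q}I\circ\bar h<c$ contradicts \eqref{s2e1} and proves the proposition.

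The main obstacle is exactly this last step: guaranteeing that the trajectories issuing from the high part of $h(Q)$ descend below $c$ \emph{before} leaving the $2\delta$-neighborhood, i.e. before $g$ switches off for spatial reasons. This requires balancing the displacement $\int_0^t g(1+\|\eta\|)\,ds$ against the achieved descent, and it is where the weight $(1+\|\eta\|)$ together with the calibrated constants $8\varepsilon/\delta$, $2\delta$, $2\varepsilon$ must be played off against the pseudo-gradient bound $\|V\|\le 2\|I'\|$; this quantitative estimate is what forces the factor $8$ (from the constant $2$ in the pseudo-gradient inequality and the choice $T\asymp\delta$). An alternative route that sidesteps the flow is to apply Ekeland's variational principle to $\Phi(h):=\sup_Q I\circ h$ on the complete metric space $(\Gamma,d_\infty)$, obtaining a nearly optimal $\bar h$ with a slope bound, and then to convert that slope bound into $a)$--$c)$ through a localized pseudo-gradient perturbation supported away from $Q_0$; the conversion carries the same delicate balancing of the weighted gradient against the admissible displacement, so the crux is unavoidable either way.
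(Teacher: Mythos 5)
The paper itself contains no proof of this statement: Proposition~\ref{P2.8} is quoted verbatim (up to notation) from \cite{LW}, so the only meaningful comparison is with the source argument, which---following Jeanjean's scheme---runs through Ekeland's variational principle on the space of maps, not through a deformation flow. Your primary route therefore differs from the source, and it contains a genuine gap, precisely the one you flag but do not resolve, and it is fatal rather than merely delicate. With the field $-g(\eta)\,(1+\|\eta\|)\,V(\eta)/\|V(\eta)\|$ your descent estimate gives $\frac{d}{dt}I(\eta)\le -\frac{4\varepsilon}{\delta}g(\eta)$, so lowering a trajectory from level $c+\varepsilon$ past $c-2\varepsilon$ costs a time of order $\delta$, during which the displacement is $\int_0^T g(\eta)(1+\|\eta\|)\,dt$, i.e.\ of order $\delta\sup_t\bigl(1+\|\eta(t)\|\bigr)$; equivalently, the displacement per unit of descent is controlled only by $2/\|I'(\eta)\|\le \frac{\delta\,(1+\|\eta\|)}{4\varepsilon}$, because the contradiction hypothesis bounds the \emph{weighted} quantity $(1+\|u\|)\|I'(u)\|$ from below, not $\|I'(u)\|$ itself. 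Since $Q$ is merely a metric space, $h(Q)$ need not be bounded, so no uniform time $T\asymp\delta$ keeps all trajectories inside the $2\delta$-neighborhood of $h(Q)$ until they fall below $c$: trajectories through points with $\|\eta\|$ large exit the region where $g\equiv 1$ while still at levels above $c$, and the deformed map $\bar h$ need not satisfy $\sup_Q I\circ\bar h<c$. A Willem-type renormalization $V/\|V\|^2$ merely relocates the same factor $1+\|\eta\|$ into the speed bound, so there is no evident repair inside the flow framework that preserves the stated absolute constants $2\delta$ and $8\varepsilon/\delta$.

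By contrast, the alternative you relegate to one closing sentence is the correct proof and essentially the one in \cite{LW}: apply Ekeland's principle to $\Phi(g):=\sup_{u\in Q}I(g(u))$ on the complete metric space $\bigl(\{g\in\Gamma:\ g|_{Q_0}=h|_{Q_0}\},\,d_\infty\bigr)$ --- note $\Phi$ is lower semicontinuous as a pointwise supremum of $d_\infty$-continuous functionals, $\Phi\ge c$ there, and $\Phi(h)\le c+\varepsilon$ --- to obtain $\bar h$ with $\Phi(\bar h)\le\Phi(h)$, $d_\infty(\bar h,h)\le\delta$ and $\Phi(g)\ge\Phi(\bar h)-\frac{\varepsilon}{\delta}d_\infty(g,\bar h)$ for all admissible $g$; then test this inequality with the weighted perturbation $g_t(q)=\bar h(q)-t\,\lambda(q)\bigl(1+\|\bar h(q)\|\bigr)w(q)$, where $w$ is a pseudo-gradient direction and $\lambda$ a cut-off supported on the near-maximal set $\{q: I(\bar h(q))\ge \Phi(\bar h)-\nu\}$, which avoids $Q_0$ because $I\le\omega<c-2\varepsilon$ on $\bar h(Q_0)$, so admissibility is preserved. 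The decisive point --- and the reason this route does not meet your obstacle --- is that the weight $1+\|\bar h(q)\|$ enters both the decrease of $\Phi$ produced by the perturbation, of order $t\,(1+\|u\|)\|I'(u)\|$, and the Ekeland cost, of order $\frac{\varepsilon}{\delta}\,t\,(1+\|u\|)$, so it cancels in the gain-versus-cost comparison and yields directly a point $u$ with $(1+\|u\|_E)\|I'(u)\|_{E'}<\frac{8\varepsilon}{\delta}$ (the factor $8$ absorbing the pseudo-gradient constant $2$ and the cut-off bookkeeping), while $a)$ and $b)$ follow from $c\le\Phi(\bar h)\le c+\varepsilon$ and $d_\infty(\bar h,h)\le\delta$; no trajectory ever has to travel. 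To turn your proposal into a proof, promote this Ekeland argument to the main line and discard the flow.
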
	
\medskip	 
\begin{corollary}[Corollary 2.9 \cite{LW}] \label{C2.9}
	 Under the assumptions of Proposition \ref{P2.8}, there exists a sequence ${(u_n)\subset E}$ satisfying
	 \[
	 I(u_n) \to c, \quad (1+||u_n||_E)||I'(u_n)||_{E^*} \to 0.
	 \]
\end{corollary}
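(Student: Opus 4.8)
The plan is to derive the Cerami sequence directly from Proposition \ref{P2.8} by a sequence–extraction argument, exploiting the fact that the conclusion of Proposition \ref{P2.8} is available for \emph{every} admissible pair $(\varepsilon,\delta)$. The crucial observation is that hypothesis (\ref{s2e1}) already encodes $\omega < c < +\infty$, so the interval $\left(0,\frac{c-\omega}{2}\right)$ is nonempty and one is free to feed Proposition \ref{P2.8} a sequence of shrinking tolerances $\varepsilon_n \to 0^+$.

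First I would fix a sequence $\varepsilon_n \to 0^+$ with $0 < \varepsilon_n < \frac{c-\omega}{2}$ for every $n$. For each $n$, since $c = \inf_{h\in\Gamma}\sup_{u\in Q}I(h(u))$ is finite, the definition of infimum furnishes a map $h_n \in \Gamma$ with $\sup_{u\in Q}I(h_n(u)) \leq c + \varepsilon_n$; this is precisely the admissibility condition needed to invoke Proposition \ref{P2.8}. I would then apply Proposition \ref{P2.8} with this $\varepsilon = \varepsilon_n$, with the map $h = h_n$, and with a fixed choice such as $\delta = 1$, obtaining for each $n$ a point $u_n \in E$ satisfying conclusions $a)$--$c)$ of that proposition.

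Reading off conclusion $a)$ gives $c - 2\varepsilon_n \leq I(u_n) \leq c + 2\varepsilon_n$, whence $I(u_n) \to c$. From conclusion $c)$, with the choice $\delta = 1$, one obtains $(1+\|u_n\|_E)\|I'(u_n)\|_{E'} < 8\varepsilon_n$, and since $\varepsilon_n \to 0$ this forces $(1+\|u_n\|_E)\|I'(u_n)\|_{E'} \to 0$. As $E' = E^{*}$, these are exactly the two defining properties of the claimed Cerami sequence, so the sequence $(u_n)$ so produced completes the argument.

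Because the proof is merely a reindexing of Proposition \ref{P2.8}, no genuine obstacle appears: conclusion $b)$ (the localization $\operatorname{dist}(u_n, h_n(Q)) \leq 2\delta$) plays no role here, which is why any fixed $\delta$ suffices; one could equally take $\delta_n = \sqrt{\varepsilon_n}$ and still conclude through $8\varepsilon_n/\delta_n = 8\sqrt{\varepsilon_n}\to 0$. The only step demanding attention is verifying admissibility at each stage, namely that $\varepsilon_n$ lies in the permitted range and that a near-optimal $h_n$ exists; both are immediate consequences of (\ref{s2e1}) and of $c$ being defined as an infimum.
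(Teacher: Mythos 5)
Your argument is correct and is exactly the standard derivation: the paper itself states Corollary \ref{C2.9} without proof, citing \cite{LW}, and the proof there is precisely this extraction argument (take $\varepsilon_n\to 0^+$ with $\varepsilon_n<\frac{c-\omega}{2}$, pick near-optimal $h_n\in\Gamma$ by the infimum definition of $c$, apply Proposition \ref{P2.8} with a fixed $\delta$, and read off conclusions $a)$ and $c)$ while discarding $b)$). No gap: the only admissibility checks are the nonemptiness of $\left(0,\frac{c-\omega}{2}\right)$ and the existence of $h_n$, both of which you correctly reduce to (\ref{s2e1}) and the definition of $c$ as a finite infimum.
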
	

\qquad With the purpose of proving Theorem \ref{AR}, we are going to show that under its assumptions we can apply Proposition \ref{P2.8} and Corollary \ref{C2.9}, obtaining the desired $(C)_c$ sequence for $I$.
\medskip

\begin{proof}[Proof of Theorem \ref{AR}]
In order to apply Proposition \ref{P2.8}, we first show that ${c\geq \alpha>\omega}$. In fact, we prove that for every $h \in \Gamma$ there exists $v_h \in h(Q)\cap (\partial B_\rho\cap X)$. Hence, in view of $(I_1)$ we arrive at
	\[
	\max_{u \in Q}I(h(u))\geq I(v_h)\geq \alpha, \quad \forall \quad h \in \Gamma,
	\]
	getting $c\geq\alpha$. Let us obtain such a $v_h$. Observe that we look for $u_h \in Q$ such that ${h(u_h) = v_h \in X}$ and $||v_h|| = \rho$. Defining $P:E\to V$ as the projector operator on $V$ and $H:Q\to \mathbb{R}e \oplus V$ given by
	\[
	H(u) := ||(1-P)h(u)||e + P(h(u)), 
	\]
	it implies that $H\in C(Q,\mathbb{R}e \oplus V)$ and provided that $h|_{\partial Q} = h_0$, for $u\in \partial Q$ we have
	\[
	H(u) = ||(1-P)h_0(u)||e + P(h_0(u)) =: H_0(u)\not= \rho e,
	\]
	since $h_0(\partial Q)\cap (\partial B_\rho \cap X) = \emptyset$, from $(iii)$ in $(I_2)$. Thus, identifying $\mathbb{R}e \oplus V$ with $\mathbb{R}^d$ for some $d\in \mathbb{N}$, the Brouwer's degree $\deg(H,\text{int}(Q),\rho e)$ is well defined. Since $h|_{\partial Q} = h_0$ then $H|_{\partial Q} = H_0$ and 
	\begin{equation}\label{s2e2}
	\deg(H,\text{int}(Q), \rho e) = \deg(H_0,\text{int}(Q),\rho e),
	\end{equation}
	in view of the \ Brouwer's degree \ properties. \ Furthermore,  $(iv)$ in $(I_2)$ asserts there exists a unique ${w \in h_0(Q)\cap (\partial B_\rho \cap X)}$ and $\deg(h_0,\text{int}(Q), w) \not = 0$. Hence, for each $u_0 \in Q$ such that $h_0(u_0) = w$, it implies that $H_0(u_0) = \rho e$ and the reverse is also true, then
	\begin{equation}\label{s2e3}
	\deg(H_0,\text{int}(Q), \rho e) = \deg(h_0,\text{int}(Q),w) \not= 0.
	\end{equation}
 Therefore, combining (\ref{s2e2}) and (\ref{s2e3}) we arrive at $\deg(H,\text{int}(Q),\rho e) \not = 0$ and there exists $u_h \in Q$ such 
 that 
 \[
 h(u_h) = \rho e = v_h \in (\partial B_\rho \cap X),
 \] 
which proves the existence of $v_h$. 

\qquad Now we define $\Gamma_0 := \{h_0\}$ and $Q_0 :=\partial Q$, hence from $(ii)$ in $(I_2)$ we have
\begin{equation}\label{s2e4}
\sup_{h_0\in\Gamma_0}\sup_{u\in Q_0}I(h_0(u)) = \sup_{u\in \partial Q}I(h_0(u)) = \omega < \alpha \leq c.
\end{equation}	
In addition, from $(i)$ in $(I_2)$, we conclude that $c \leq \displaystyle\sup_{u\in Q}I(h_0(u))<+\infty$, which along with (\ref{s2e4}) provide condition (\ref{s2e1}) in Proposition \ref{P2.8}. Therefore, applying Proposition \ref{P2.8} and Corollary \ref{C2.9} we guarantee the existence of $(u_n)$, a $(C
)_c$ sequence for $I$, proving the result. 	
\end{proof}	

\section{Setting The Linking Structure}

\qquad In this section we show that functional $I$ defined in (\ref{I}) satisfies $(I_1)-(I_2)$ in Theorem \ref{AR}. In order to do so, we first establish the necessary notation. In fact, we set $X:= E^+$ and $V:=E^-$, since from $(V_2)_0$ we know that $ V$ is finite dimensional. Secondly, in order to distinguish the cases where either $(f_3)_0$ or $(f_3)'_0$ hold, we study autonomous problems and make use of classical results due to Berestycki and Lions \cite{BL} and Jeanjean and Tanaka \cite{JT}. We consider the following equation
\begin{equation}\label{nce1}
-\Delta u = g(u), \quad \text{in} \quad \mathbb{R}^N \quad  N\geq3,
\end{equation}
where it is assumed on $h$ that
\medskip
\newline
$(g_0)$ \ $g:\mathbb{R}\to \mathbb{R}$ is continuous and odd;
\medskip
\newline
$(g_1)$ \ There exists the limit $\displaystyle\lim_{s \to 0^+}\dfrac{g(s)}{s} \in (-\infty,0)$;
\medskip
\newline
$(g_2)$ \ $\displaystyle\lim_{s \to +\infty}|h(s)|s^{-\frac{N+2}{N-2}} = 0$.
\medskip
\newline
Associated to (\ref{nce1}) is the functional $J: H^1(\mathbb{R}^N)\to \mathbb{R}$ given by
\[
J(u) = \dfrac{1}{2}\int_{\mathbb{R}^N}|\nabla u(x)|^2\;dx - \int_{\mathbb{R}^N}G(u(x))\;dx,
\]
where $G(u) = \displaystyle\int_{0}^{u}g(s)\;ds$.

\qquad To study this problem, we recall two remarkable results found in \cite{BL,JT}, we observe that we do not state them in their full generality.

\begin{proposition}[Proposition 4.1 \cite{JT}]\label{P4.1}
	Assume $(g_0)-(g_2)$. Then $J$ is well defined and problem (\ref{nce1}) has a nontrivial solution if and only if $G(s_0)>0$ for some $s_0>0$.
\end{proposition}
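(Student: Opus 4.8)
The plan is to recognize this as the Berestycki--Lions existence theorem and to split the proof into the (easy) necessity direction, driven by the Pohozaev identity, and the (hard) sufficiency direction, driven by a constrained minimization. First, for well-definedness, I would extract from $(g_1)$ the bound $|G(s)|\le C s^2$ near the origin and from $(g_2)$ the bound $|g(s)|\le \varepsilon |s|^{(N+2)/(N-2)}+C_\varepsilon$ for large $|s|$, which integrate to $|G(s)|\le C(s^2+|s|^{2^*})$ with $2^*=2N/(N-2)$. Then $\int_{\mathbb R^N}|G(u)|\le C(\|u\|_2^2+\|u\|_{2^*}^{2^*})<\infty$ by the Sobolev embedding, so $J$ is finite on $H^1(\mathbb R^N)$.

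For necessity, suppose $u\neq0$ solves (\ref{nce1}). After the standard elliptic bootstrap (the growth $(g_2)$ guarantees $u\in W^{2,p}_{loc}\cap C^2$ with decay), $u$ satisfies the Pohozaev identity $\tfrac{N-2}{2}\int|\nabla u|^2=N\int G(u)$. Since $u$ is nontrivial the left-hand side is strictly positive, hence $\int_{\mathbb R^N}G(u)>0$; therefore $G(u(x))>0$ on a set of positive measure, and taking $s_0:=|u(x_0)|>0$ at such a point and using that $G$ is even (from $(g_0)$) yields $G(s_0)>0$.

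The substantial direction is sufficiency. Assuming $G(s_0)>0$, I would minimize the Dirichlet energy $T(u):=\int_{\mathbb R^N}|\nabla u|^2$ over the constraint $\mathcal C:=\{u\in H^1:\ V(u):=\int_{\mathbb R^N}G(u)=1\}$. The set $\mathcal C$ is nonempty: a plateau function equal to $s_0$ on $B_R$ and cut off to $0$ on $B_{R+1}\setminus B_R$ has $V=G(s_0)|B_R|+O(R^{N-1})>0$ for $R$ large, and the dilation $u_\sigma(x)=u(x/\sigma)$, which rescales $V(u_\sigma)=\sigma^N V(u)$, normalizes the constraint. The infimum $m:=\inf_{\mathcal C}T$ is strictly positive: if $T(u_n)\to0$ with $V(u_n)=1$, then $\|u_n\|_{2^*}\to0$, while splitting $V(u_n)=\int_{|u_n|\le\delta}G(u_n)+\int_{|u_n|>\delta}G(u_n)$ shows the first integral is $\le0$ (as $G<0$ near $0$) and the second is $\le C_\delta\|u_n\|_{2^*}^{2^*}\to0$, contradicting $V(u_n)=1$.

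To attain $m$ I would pass to Schwarz symmetrizations: since $G$ is even, $V(u^*)=V(|u|^*)=V(u)$ by equimeasurability while $T(u^*)\le T(u)$ by P\'olya--Szeg\H{o}, so the infimum is reached along radial, nonincreasing functions, where the embedding into $L^p$, $2<p<2^*$, is compact. This is exactly where the main obstacle lies: the lack of translation compactness on $\mathbb R^N$ together with the negative quadratic part of $G$ at the origin, which is not controlled by $T$ alone. I would resolve it in the Berestycki--Lions manner, decomposing $g$ into a nonnegative superlinear-but-subcritical piece, handled by the compact radial embedding (and Fatou), and a piece dominating the coercive quadratic, for which weak lower semicontinuity applies; this lets me pass to the limit in a radial minimizing sequence and produce a minimizer $u\in\mathcal C$ with $T(u)=m>0$. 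Finally, the Lagrange multiplier rule gives $-\Delta u=\theta\,g(u)$ for some $\theta\in\mathbb R$, and the Pohozaev identity for this equation yields $(N-2)m=N\theta$, so $\theta=\tfrac{(N-2)m}{N}>0$; rescaling $v(x):=u(x/\sqrt{\theta})$ produces a nontrivial solution of $-\Delta v=g(v)$, completing the proof.
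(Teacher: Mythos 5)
The paper does not prove Proposition \ref{P4.1} at all: it is recalled as a known result, quoted from Jeanjean--Tanaka \cite{JT}, whose own proof is exactly the Pohozaev identity for necessity plus the Berestycki--Lions \cite{BL} constrained minimization for sufficiency. Your blind reconstruction follows precisely that classical route --- Pohozaev for the ``only if'' direction; minimization of $\int_{\mathbb{R}^N}|\nabla u|^2$ on $\{\int_{\mathbb{R}^N} G(u)=1\}$ via Schwarz symmetrization, the Berestycki--Lions decomposition of $g$ into a subcritical piece and a coercive piece, radial compactness, and a Lagrange multiplier eliminated by dilation for the ``if'' direction --- so it coincides with the argument of the cited source and is sound. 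The only blemishes are cosmetic: the Pohozaev identity for $-\Delta u=\theta g(u)$ gives $\theta=\frac{(N-2)m}{2N}$ rather than $\frac{(N-2)m}{N}$; the weak limit of the radial minimizing sequence a priori satisfies only $\int_{\mathbb{R}^N} G(u)\geq 1$ and must be brought back to the constraint by a dilation (which simultaneously yields attainment of $m$); and Fatou's lemma pairs with the coercive piece $G_2$ while the compact radial embedding handles the subcritical piece $G_1$ --- none of which affects the structure of the proof.
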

\begin{proposition}[Proposition 4.2 \cite{JT}]\label{P4.2}
	Assume $(g_0)-(g_2)$ and $G(s_0)>0$ for some $s_0>0$. Let $\tilde{u}$ be a critical point of (\ref{nce1}) with $\tilde{u}(x)>0$ for all $x\in \mathbb{R}^N$. Then, there exists a path ${\gamma \in C([0,1],H^1(\mathbb{R}^N))}$ such that $\gamma(t)(x)>0$ for all $x\in \mathbb{R}^N$, $t\in (0,1], \ \gamma(0)=0, J(\gamma(1))<0,$  ${\tilde{u} \in \gamma([0,1])}$ and
	\[
	\max_{t\in[0,1]}J(\gamma(t)) = J(\tilde{u}).
	\]
\end{proposition}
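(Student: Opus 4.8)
The plan is to construct $\gamma$ explicitly as a dilation path built from the single solution $\tilde u$, exploiting the fact that for the autonomous equation (\ref{nce1}) the behavior of $J$ along dilations is governed by the Pohozaev identity. Concretely, I would set $\tilde u_\theta(x):=\tilde u(x/\theta)$ for $\theta>0$ and reduce the problem to the study of the one-variable function $\theta\mapsto J(\tilde u_\theta)$.

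First I would record the two scaling laws $\int_{\mathbb{R}^N}|\nabla\tilde u_\theta|^2\,dx=\theta^{N-2}\int_{\mathbb{R}^N}|\nabla\tilde u|^2\,dx$ and $\int_{\mathbb{R}^N}G(\tilde u_\theta)\,dx=\theta^{N}\int_{\mathbb{R}^N}G(\tilde u)\,dx$, both obtained by the change of variables $y=x/\theta$. Writing $a:=\int_{\mathbb{R}^N}|\nabla\tilde u|^2\,dx>0$ and $b:=\int_{\mathbb{R}^N}G(\tilde u)\,dx$, this yields $J(\tilde u_\theta)=\tfrac{a}{2}\theta^{N-2}-b\,\theta^{N}$. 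Next I would invoke the Pohozaev identity for the critical point $\tilde u$, namely $\tfrac{N-2}{2}a=N b$, which both shows $b=\tfrac{N-2}{2N}a>0$ and, upon differentiating, pins the unique critical point of the profile at $\theta=1$: indeed $\frac{d}{d\theta}J(\tilde u_\theta)=\theta^{N-3}\bigl(\tfrac{(N-2)}{2}a-Nb\,\theta^{2}\bigr)$ vanishes only where $\theta^2=\tfrac{(N-2)a}{2Nb}=1$, and is positive for $\theta<1$, negative for $\theta>1$. Hence $\theta=1$ is the unique global maximum of $\theta\mapsto J(\tilde u_\theta)$ on $(0,\infty)$, with value $J(\tilde u)=a/N>0$, while $J(\tilde u_\theta)\to-\infty$ as $\theta\to+\infty$ and $J(\tilde u_\theta)\to0$ as $\theta\to0^+$.

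With this profile understood, I would assemble the path. I pick $\theta_1>1$ so large that $J(\tilde u_{\theta_1})<0$ (possible since the profile tends to $-\infty$), and define $\gamma(t):=\tilde u_{\theta_1 t}$ for $t\in(0,1]$ together with $\gamma(0):=0$, so that $t\mapsto\theta=\theta_1 t$ traces $[0,\theta_1]$ affinely. Positivity $\gamma(t)(x)=\tilde u(x/(\theta_1 t))>0$ for $t\in(0,1]$ follows from $\tilde u>0$; the endpoint value $J(\gamma(1))=J(\tilde u_{\theta_1})<0$ and the membership $\tilde u=\gamma(1/\theta_1)\in\gamma([0,1])$ are immediate; and since the profile attains its maximum over $[0,\theta_1]$ exactly at $\theta=1$ (the value at the left endpoint $\theta=0$ being $0<a/N$), we obtain $\max_{t\in[0,1]}J(\gamma(t))=J(\tilde u)$. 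To see $\gamma\in C([0,1],H^1(\mathbb{R}^N))$ I would use that dilations act strongly continuously on $H^1(\mathbb{R}^N)$ for $\theta\in(0,\infty)$, together with $\|\tilde u_\theta\|_{H^1}^2=\theta^{N}\|\tilde u\|_{L^2}^2+\theta^{N-2}\|\nabla\tilde u\|_{L^2}^2\to0$ as $\theta\to0^+$, which makes the extension $\gamma(0)=0$ continuous.

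The main obstacle I anticipate is justifying the Pohozaev identity rigorously, since it underlies both the sign of $b$ and the location of the maximum. This requires sufficient regularity and decay of $\tilde u$; under $(g_0)$--$(g_2)$ the Berestycki--Lions regularity theory guarantees $\tilde u\in C^2$ with adequate decay at infinity, which is precisely what is needed to integrate by parts in the Pohozaev computation and to ensure $a$ and $b$ are finite. The remaining ingredients---the strong $H^1$-continuity of the dilation group on $(0,\infty)$ and the vanishing of $\gamma(t)$ as $t\to0^+$---are routine once the scaling laws are in hand.
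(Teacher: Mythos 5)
Your proposal is correct and follows essentially the same route as the paper's source for this result: the paper cites \cite{JT} and itself records the path as the dilation $\gamma(t)(x)=\tilde{u}\big(x/(tL)\big)$ with $L$ large, whose profile $\theta\mapsto J(\tilde{u}(\cdot/\theta))=\tfrac{a}{2}\theta^{N-2}-b\,\theta^{N}$ is analyzed exactly via the Pohozaev identity $\tfrac{N-2}{2}a=Nb$, as you do. Your verification of the endpoint values, the location of the maximum at $\theta=1$, and the $H^1$-continuity down to $\gamma(0)=0$ matches the argument in \cite{JT}, so there is nothing to add.
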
	

\qquad Returning to problem (\ref{prob_0}), under assumptions of Theorem \ref{main2} we consider the associated ``problem at infinity"
\begin{equation}\label{nce2}
\Delta u + V_{0,\infty}u = f_0(u), \quad \text{in} \quad \mathbb{R}^N, \quad  N\geq3,
\end{equation}
which is equivalent to problem (\ref{nce1}) with
\[
g(s) = \left \{\begin{array}{ll}
-V_{0,\infty}s + f_0(s), \quad \text{for} \ s\geq 0,\\
-g(-s), \quad \quad \quad \quad \text{for} \ s<0,
\end{array}
\right.
\]
since the ground state solution of (\ref{nce2}) is nonnegative. If $I_\infty : H^1(\mathbb{R})\to \mathbb{R}$ is the functional associated to (\ref{nce2}), we observe that applying Proposition \ref{P4.1} to $J = I_\infty$, we conclude that $I_\infty$ has no nontrivial critical points in case $(f_3)_0$ is satisfied.

\qquad In case problem (\ref{nce2}) has no nontrivial solutions, in order to prove that $I$ satisfies $(I_1)-(I_2)$ in Theorem \ref{AR}, we choose $h_0 \equiv Id$, the identity map. Thereby, from hypothesis $(V_1)_0$ one has $a_0> \sigma^+ - f'(0)>0$, hence Theorem 1.1' in \cite{BS} asserts the spectral family of operator $A_0$ ensures the existence of some $e \in X= E^+$ with $||e|| =1$ and satisfying
\begin{equation}\label{le}
\big(\sigma^+ - f'(0)\big)||e||_{L^2(\mathbb{R}^N)}^2\leq ||e||^2 = 1 < a_0||e||^2_{L^2(\mathbb{R}^N)}.
\end{equation}
Choosing such an $e$ and defining $Q$ as in Theorem \ref{AR}, next lemma shows that $I$ satisfies $(I_1)-(I_2)$ for  sufficiently small $\rho>0$, for some $\alpha>0 = \omega$ and for $R>\rho$ large enough.

\begin{lemma} \label{l2}
Assuming $(f_1)_0-(f_2)_0$ and $(V_1)_0-(V_2)_0$, there exist $\alpha >0$ and $R>\rho>0$ such that 
\medskip
\newline
$(i)\quad I|_{\partial B_\rho\cap X} \geq \alpha$;
\medskip
\newline
$(ii)\quad \displaystyle\sup_{u\in Q}I(u) <+\infty;$
\newline
$(iii)\quad \displaystyle\sup_{u\in \partial Q}I(u)= 0$;
\medskip
\newline
$(iv)\quad \partial Q\cap (\partial B_\rho \cap X) = \emptyset$;
\medskip
\newline
$(v)\quad$There exists a unique $w \in Q\cap(\partial B_\rho \cap X)$ and $\deg(Id,\text{int}(Q),w)=1 \not=0$.
\end{lemma}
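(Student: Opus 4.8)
The plan is to verify $(i)$--$(v)$ using the orthogonal splitting $E=V\oplus X=E^-\oplus E^+$ (with $V=E^-$ finite dimensional by $(V_2)_0$), the $L^2$-orthogonality of the spectral subspaces $E^\pm$, and the asymptotic linearity of $f_0$. The conditions $(ii)$, $(iv)$, $(v)$ and $(i)$ are structural or classical, while $(iii)$ carries the real content. First note that $(f_1)_0$ makes $F_0$ even and nonnegative, so for $u=v+re\in Q$ one has
\[
I(u)=\tfrac12\big(r^2-\|v\|^2\big)-\int_{\mathbb{R}^N}F_0(u)\,dx\leq \tfrac12 R^2,
\]
which gives $(ii)$. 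For $(iv)$ and $(v)$, observe that a point $u=v+re$ belongs to $X=E^+$ only if its $E^-$-component $v$ vanishes, so $Q\cap X=\{re:0\leq r\leq R\}$; hence the only points of $\partial Q$ lying in $X$ are $0$ and $Re$, of norms $0$ and $R$, neither equal to $\rho\in(0,R)$, which yields $(iv)$, and the unique point of $Q$ on $\partial B_\rho\cap X$ is $w=\rho e\in\mathrm{int}(Q)$. Since for $h_0=\mathrm{Id}$ the reduced map $H_0(u)=\|(1-P)u\|e+P(u)$ coincides with the identity on $Q$ (because $\|(1-P)u\|=r$ there), its Brouwer degree at $\rho e$ equals $1\neq 0$, giving $(v)$. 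Finally $(i)$ is the standard mountain-pass estimate on $X$: bounding $F_0(s)\leq \tfrac{\varepsilon}{2}s^2+C_\varepsilon|s|^p$ for some $p\in(2,2^*)$ via $(f_1)_0$--$(f_2)_0$, and using \eqref{e00} together with the Sobolev embedding, one finds $I(u)\geq(\tfrac12-C\varepsilon)\rho^2-C_\varepsilon'\rho^{p}$ on $\partial B_\rho\cap X$, which is bounded below by some $\alpha>0$ once $\varepsilon$ and $\rho$ are small.

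The heart of the lemma is $(iii)$, namely $\sup_{\partial Q}I=0$. On the bottom face $r=0$ one has $u=v\in E^-$ and $I(u)=-\tfrac12\|v\|^2-\int_{\mathbb{R}^N}F_0(v)\,dx\leq 0$, with equality at $u=0\in\partial Q$; on the lateral face $\|v\|=R$ one has $I(u)=\tfrac12(r^2-R^2)-\int_{\mathbb{R}^N}F_0(u)\,dx\leq 0$ since $r\leq R$ and $F_0\geq 0$. Thus $\sup_{\partial Q}I\geq I(0)=0$, and it remains to control the top face $\{v+Re:\|v\|\leq R\}$, which is the main obstacle: there $I(v+Re)=\tfrac12 R^2-\tfrac12\|v\|^2-\int_{\mathbb{R}^N}F_0(v+Re)\,dx$, and the large positive term $\tfrac12 R^2$ must be absorbed by the nonlinear integral. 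I would argue by contradiction, supposing that for every $n$ there is $v_n$ with $\|v_n\|\leq n$ and $I(v_n+ne)>0$. Writing $w_n=v_n+ne$, one has $\|w_n\|^2=n^2+\|v_n\|^2\to+\infty$; normalizing $\bar w_n=w_n/\|w_n\|$ in the finite-dimensional space $E^-\oplus\mathbb{R}e$, a subsequence converges to some $\bar w=\bar v+\beta e$ with $\|\bar w\|=1$ and $\beta\geq 0$.

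The crux is to pass to the limit in $I(w_n)/\|w_n\|^2>0$. Since $(f_2)_0$ gives $F_0(s)/s^2\to a_0/2$ as $|s|\to+\infty$ while $(f_1)_0$ keeps $F_0(s)/s^2$ bounded with limit $0$ at the origin, a generalized dominated convergence argument (using the strong $L^2$-convergence $\bar w_n\to\bar w$, valid in the finite-dimensional subspace) yields $\|w_n\|^{-2}\int_{\mathbb{R}^N}F_0(w_n)\,dx\to \tfrac{a_0}{2}\|\bar w\|_{L^2}^2$, and hence in the limit
\[
\tfrac12\|\bar w^+\|^2-\tfrac12\|\bar w^-\|^2-\tfrac{a_0}{2}\|\bar w\|_{L^2}^2\geq 0.
\]
Here the decisive structural fact is that $E^+$ and $E^-$, being the ranges of the complementary spectral projections $I-\mathcal{E}(0)$ and $\mathcal{E}(0)$ of the self-adjoint operator $A_0$, are orthogonal in $L^2$; therefore $(\bar v,e)_{L^2}=0$ and $\|\bar w\|_{L^2}^2=\|\bar v\|_{L^2}^2+\beta^2\|e\|_{L^2}^2$. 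Using $\bar w^+=\beta e$ with $\|e\|=1$ and $\bar w^-=\bar v$, the inequality rewrites as
\[
\tfrac{\beta^2}{2}\big(1-a_0\|e\|_{L^2}^2\big)-\tfrac12\|\bar v\|^2-\tfrac{a_0}{2}\|\bar v\|_{L^2}^2\geq 0,
\]
which is impossible: the direction $e$ was selected in \eqref{le} precisely so that $1-a_0\|e\|_{L^2}^2<0$, so the left-hand side is $\leq 0$ and vanishes only when $\bar v=0$ and $\beta=0$, contradicting $\|\bar w\|=1$. This contradiction produces a finite $R$ with $I\leq 0$ on the top face, completing $(iii)$ with $\sup_{\partial Q}I=0$. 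I expect this top-face estimate---and in particular the combination of the $L^2$-orthogonality of $E^\pm$ with the calibrated choice of $e$---to be the delicate point, the other four conditions being routine.
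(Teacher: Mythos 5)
Your proposal is correct, and for the decisive item $(iii)$ it takes a genuinely different route from the paper. Items $(i)$, $(iv)$, $(v)$ coincide with the paper's argument (the paper even gets $(ii)$ from continuity of $I$ plus compactness of $Q\subset\mathbb{R}e\oplus V$, whereas your direct bound via $F_0\geq 0$ is if anything simpler). For the top face in $(iii)$, the paper argues directly: it fixes a bounded domain $\Omega$ chosen, thanks to the strict inequality in (\ref{le}), so that $1-a_0\int_\Omega e^2\,dx<0$, and then shows via $(f_2)_0$ and dominated convergence on $\Omega$ that
\[
I(v+Re)\leq \frac{R^2}{2}\Big(1-a_0\int_\Omega e^2\,dx\Big)+o_R(1)<0
\]
for $R$ large. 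You instead argue by contradiction with a normalized sequence $\bar w_n=(v_n+ne)/\|v_n+ne\|$ in the finite-dimensional space $E^-\oplus\mathbb{R}e$, pass to the limit using generalized dominated convergence on all of $\mathbb{R}^N$, and exploit the exact $L^2$-orthogonality of the spectral subspaces $E^\pm$ together with (\ref{le}) to force $\bar w=0$, contradicting $\|\bar w\|=1$. Both proofs hinge on the same calibration $1<a_0\|e\|^2_{L^2}$ of the direction $e$, but your compactness argument has a technical advantage: it automatically handles the fact that $v$ ranges over the growing ball $\bar B_R\cap V$, whereas the paper's $o_R(1)$ must be (and is not explicitly shown to be) uniform in $v$, and the paper invokes orthogonality of $e$ and $v$ over the bounded set $\Omega$, where it only holds approximately; working globally, as you do, makes the orthogonality exact. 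The price you pay is non-constructiveness (no explicit $R$) and an essential use of $\dim(E^-\oplus\mathbb{R}e)<\infty$, which the paper's direct estimate does not need at this point.
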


\begin{proof}
From $(f_1)_0-(f_2)_0$, given $\varepsilon>0$ and $p \in [2,2^*)$ there exists $C_\varepsilon>0$ such that
\[
|F_0(t)|\leq \dfrac{1}{2}\varepsilon|t|^2 + \dfrac{C_\varepsilon}{p}|t|^p \quad \forall \; t \in [0,1],
\]
thus, given $u^+ \in \partial B_\rho \cap X$ one has
\begin{eqnarray*}
I(u^+) &=& \dfrac{1}{2}||u^+||^2 - \int_{\mathbb{R}^N}F_0(u^+(x))\;dx\\
&\geq&\dfrac{1}{2}||u^+||^2 - \dfrac{1}{2}\varepsilon||u^+||^2_{L^2(\mathbb{R}^N)} - \dfrac{C_\varepsilon}{p}||u^+||^p_{L^p(\mathbb{R}^N)}\\
&\geq& \dfrac{r^2}{2}\Big(1 - \varepsilon C^2_2 - \dfrac{C_\varepsilon}{p}C_p^pr^{p-2}\Big)\\
&=&\alpha>0,
\end{eqnarray*}
where, $C_q>0$ is the constant given by the continuous embedding $E\hookrightarrow L^q(\mathbb{R}^N)$ for $q \in [2,2^*]$ and we choose $\varepsilon>0, \; \rho>0$ small enough to guarantee $\alpha>0$. Thus, we have just proved $(i)$.

\qquad On the other hand, to prove $(ii)$, we observe that $I$ is continuous and $Q\subset \mathbb{R}e \oplus V$ is a compact set, hence,
\[
\sup_{u\in Q}I(u) = \max_{u \in Q}I(u)<+\infty.
\]

\qquad To prove $(iii)$, first we note that 
\[
\partial Q = (\partial B_R \cap V)\oplus \{re:0\leq r\leq R\} \cup (\bar{B}_R\cap V) \cup (\bar{B}_R\cap V)\oplus \{Re\}.
\]
Hence, in case $u\in(\partial B_R \cap V)\oplus \{re:0\leq r\leq R\}$, we have
 \[
 I(u) = \dfrac{1}{2} \left[r^2 - R^2\right] - \int_{\mathbb{R}^N}F_0(u)\;dx \leq \dfrac{1}{2} \left[r^2 - R^2\right] \leq0.
 \]
 In case $u \in \bar{B}_R\cap V$, we have
 \[
 I(u) = - \dfrac{1}{2}||u||^2 - \int_{\mathbb{R}^N}F_0(u)\;dx \leq 0.
 \]
 Lastly, in case $u \in (\bar{B}_R\cap V)\oplus \{Re\}$, we can write $u = v+re$ with $v\in \bar{B}_R\cap V$, then
 \begin{eqnarray}\label{s3e1}
  I(u) &=&  \dfrac{1}{2} \left[R^2 - ||v||^2\right] - \int_{\mathbb{R}^N}F_0(Re + v)\;dx\nonumber\\
  & \leq& \dfrac{1}{2}R^2 - \int_{\Omega}F_0(Re + v)\;dx\nonumber\\
  &=& \dfrac{R^2}{2}\left(1 - a_0\int_{\Omega}e^2(x)\;dx\right) - \dfrac{a_0}{2}\int_{\Omega}v^2(x)\;dx + o_R(1),
 \end{eqnarray}
 as $R\to +\infty$, for a arbitrary bounded domain $\Omega \subset \mathbb{R}^N$, provided that $(f_2)_0$ and Lebesgue Dominated Convergence Theorem imply that
 \[
 \lim_{R \to +\infty}\int_{\Omega}\dfrac{F_0(Re(x)+v(x))}{(Re(x)+v(x))^2} (Re(x)+v(x))^2\;dx = \dfrac{a_0}{2}\int_{\Omega}(R^2e^2(x)+v^2(x))\;dx,
 \]
 since $e$ and $v$ are orthogonal. Thus, we arrive at (\ref{s3e1}). Furthermore, from (\ref{le}) we can choose $\Omega$ such that
 \begin{equation}\label{s3e2}
 1 - a_0\int_{\Omega}e^2(x)\;dx <0.
 \end{equation}
Combining (\ref{s3e1}) and (\ref{s3e2}) we get
\begin{equation}\label{s3ee1}
I(u) \leq \dfrac{R^2}{2}\left(1 - a_0\int_{\Omega}e^2(x)\;dx\right) + o_R(1) < 0,
\end{equation}
for $R>\rho>0$ sufficiently large. Therefore $\displaystyle\sup_{u\in \partial Q}I(u) = 0$, finishing the proof of $(iii)$.

\qquad For proving $(iv)$ it is only necessary observe that $R>\rho$, then 
\[
\partial Q\cap (\partial B_\rho \cap X) = (\partial Q \cap \partial B_\rho) \cap X = (V\cap\partial B_\rho)\cap X  = \emptyset.
\]
Moreover, since $R>\rho$,
\[
Q\cap(\partial B_\rho \cap X) = \{re:0\leq r\leq R\}\cap \partial B_\rho = \{\rho e\} \quad \text{and} \quad 
	\rho e \in \text{int}(Q),
\]
we get $(v)$ as consequence of Brouwer's degree properties. Therefore, we have finished the proof.
\end{proof} 

\qquad Note that the strict inequality in (\ref{le}), inherited from $(V_1)_0$, was essential to obtain the suitable function $e$ to prove $(iii)$ in Lemma \ref{l2}. Furthermore, by Lemma \ref{l2} $I$ satisfies all assumptions in Theorem \ref{AR}, hence we are able to apply it, getting a $(C)_c$ sequence for $I$ in case problem (\ref{nce2}) has no nontrivial solutions.

\qquad On the other hand, in case problem (\ref{nce2}) has a solution $\tilde{u}$, by applying Propositions \ref{P4.1} and \ref{P4.2} we can consider a positive solution and provide a path
\[
\gamma(t)(x) := \tilde{u}\left(\dfrac{x}{tL}\right),
\]
for a sufficiently large $L>0$ to be chosen. Details about $\gamma$ can be found in \cite{JT}, in the proof of Proposition 4.2. Since $I_\infty$ is invariant by translations, we can redefine $\gamma$ as 
\begin{equation}\label{s3ee2}
\gamma(t)(x) := \tilde{u}\left(\frac{x-y}{tL}\right),
\end{equation}
for some $y\in \mathbb{R}^N$.

\qquad Now, are going to show that for a suitable $y$ we are able to use $\gamma$ to define $h_0$, in case problem (\ref{nce2}) has a nontrivial solution. First, decomposing $\gamma(t) = \gamma^X(t)+ \gamma^V(t)$, with $\gamma^X(t)\in X= E^+$ and $\gamma^V(t) \in V = E^-$, we claim that, for sufficiently large $|y|>0$ does not depending on $t$, we have $||\gamma^V(t)||$ as small as necessary. In fact, if $\lambda_1<0$ is the smallest eigenvalue of $A_0$ then
\[
||\gamma^V(t)||^2\leq -\lambda_1||\gamma^V(t)||^2_{L^2(\mathbb{R}^N)} \leq -\lambda_1||\gamma(t)||^2_{L^2(\mathbb{R}^N)}= -\lambda_1(tL)^N||\tilde{u}||^2_{L^2(\mathbb{R}^N)}.
\]
Hence, given $\varepsilon>0$ there exists $\delta>0$ independent of $y$ such that
\begin{equation}\label{gammadelta}
||\gamma^V(t)||^2< \varepsilon \quad \text{for} \quad 0<t<\delta.
\end{equation}
Moreover, if $\tilde{u}^V \in E$ is such that $\gamma^V(t)(x) = \tilde{u}^V\left(\dfrac{x-y}{tL}\right) \in V = E^- $ then
\begin{equation}\label{gammaV}
||\gamma^V(t)||^2 = - (tL)^{N-2} \int_{\mathbb{R}^N}\left[|\nabla \tilde{u}^V(x)|^2 + (tL)^2V_0(tLx+y)(\tilde{u}^V(x))^2\right] dx.
\end{equation}
We are going to show that, given $\varepsilon>0$, for sufficiently large $|y|$, we have from (\ref{gammaV}) that
\begin{equation}\label{gamma0}
||\gamma^V(t)||^2 < -(tL)^{N}\int_{\mathbb{R}^N}V_0(tLx+y)(\tilde{u}^V(x))^2\; dx < \varepsilon \quad \text{for} \quad \delta\leq t\leq 1.
\end{equation}
In order to show (\ref{gamma0}), we observe that in view of $(V_2)_0$, given $\varepsilon' >0$, there exists $\tau>0$ such that for $|z|\geq\tau$ it follows that $|V_0(z)-V_{0,\infty}|<\varepsilon'$. Then, concerned to $t\geq \delta$, one has $|tLx|\geq \delta L|x|\geq \tau$, when $|x|\geq \dfrac{\tau}{\delta L}$. Hence, for sufficiently small $\varepsilon'$, and provided that  $V_{0,\infty} > 0$ we arrive at
\begin{eqnarray}\label{s3ee4a}
-(tL)^2\int_{\mathbb{R}^N\setminus B_{\frac{\tau}{\delta L}}}V_0(tLx)(\tilde{u}^V(x-y))^2\; dx &\leq& -\big(V_{0,\infty} -\varepsilon'\big)(\delta L)^2\int_{\mathbb{R}^N\setminus B_{\frac{\tau}{\delta L}}}(\tilde{u}^V(x-y))^2\; dx\nonumber\\
&<& 0.
\end{eqnarray}
In addition, if $|y|\to +\infty$, from the exponential decay of $\tilde{u}$, we have that
\begin{equation}\label{decay}
\int_{B_{\frac{\tau}{\delta L}}}(\tilde{u}(x-y))^2\; dx \to 0.
\end{equation}
Hence, for sufficiently large $|y|$ we get
\begin{eqnarray}\label{s3ee5a}
-(tL)^2\int_{B_{\frac{\tau}{\delta L}}}V_0(tLx)(\tilde{u}^V(x-y))^2\; dx &\leq& -(tL)^2\inf_{z\in\mathbb{R}^N}V_0(z)\int_{B_{\frac{\tau}{\delta L}}}(\tilde{u}^V(x-y))^2\; dx\nonumber\\
&\leq& (t L)^2\big|\inf_{z\in\mathbb{R}^N}V_0(z)\big|\int_{B_{\frac{\tau}{\delta L}}}(\tilde{u}^V(x-y))^2\; dx\nonumber\\
&\leq& L^2 \big|\inf_{z\in\mathbb{R}^N}V_0(z)\big| \int_{B_{\frac{\tau}{\delta L}}}(\tilde{u}(x-y))^2\; dx\nonumber\\ &<& \varepsilon.
\end{eqnarray} 
Finally, combining (\ref{s3ee4a})-(\ref{s3ee5a}) we obtain (\ref{gamma0}). Therefore, $||\gamma^V(t)||^2< \varepsilon$ for all $t\in [0,1]$. 

\qquad Next, we also claim that, for sufficiently large $|y|>0$ not depending on $t$, we have  $\gamma^X(t)\not=0$ for all $t\in [0,1]$. In fact, since 
\[
||\gamma^X(t)||^2 = (A_0\gamma(t),\gamma(t)) + ||\gamma^V(t)||^2,
\]
 it is only  necessary to prove that for each $t\in[0,1]$ one has
\begin{equation*}
(A_0\gamma(t),\gamma(t)) = \int_{\mathbb{R}^N}\left[|\nabla \gamma(t)(x)|^2 + V_0(x)\gamma(t)^2(x)\right] dx > 0.
\end{equation*}
Changing variables, it is equivalent to show that
\begin{equation}\label{s3ee3}
\int_{\mathbb{R}^N}\left[|\nabla \tilde{u}(x)|^2 + (tL)^2V_0(tLx+y)\tilde{u}^2(x)\right] dx >0.
\end{equation}
Since $V_0$ is bounded in $\mathbb{R}^N$, there exists $\delta=\delta(L)>0$ small enough, such that for all $0<t<\delta$, one has 
\begin{eqnarray*}
	\int_{\mathbb{R}^N}|\nabla \tilde{u}(x)|^2\;dx &>& (\delta L)^2|\inf_{z\in\mathbb{R}^N}V_0(z)|\int_{\mathbb{R}^N}\tilde{u}^2(x)\;dx \\
	&>& -(tL)^2\inf_{z\in\mathbb{R}^N}V_0(z)\int_{\mathbb{R}^N}\tilde{u}^2(x)\;dx\\
	&\geq& -(tL)^2\int_{\mathbb{R}^N}V_0(tLx+y)\tilde{u}^2(x)\; dx.
\end{eqnarray*}
Thus, (\ref{s3ee3}) holds for sufficiently small $t$. Furthermore, in view of $(V_2)_0$, concerned to $t\geq \delta$, for sufficiently small $\varepsilon'>0$, we arrive at the same conclusion of (\ref{s3ee4a}). On the other hand, if $|y|\to +\infty$, from the exponential decay of $\tilde{u}$, we obtain (\ref{decay}) and then for sufficiently large $|y|$ we get
\begin{eqnarray}\label{s3ee5}
-(tL)^2\int_{B_{\frac{\tau}{\delta L}}}V(tLx)\tilde{u}^2(x-y)\; dx &\leq& -(tL)^2\inf_{z\in\mathbb{R}^N}V(z)\int_{B_{\frac{\tau}{\delta L}}}\tilde{u}^2(x-y)\; dx\nonumber\\
&\leq& (t L)^2\big|\inf_{z\in\mathbb{R}^N}V(z)\big|\int_{B_{\frac{\tau}{\delta L}}}\tilde{u}^2(x-y)\; dx\nonumber\\
&\leq& L^2 \big|\inf_{z\in\mathbb{R}^N}V(z)\big| \int_{B_{\frac{\tau}{\delta L}}}\tilde{u}^2(x-y)\; dx\nonumber\\ &\leq& \dfrac{1}{2}\int_{\mathbb{R}^N}|\nabla \tilde{u}(x)|^2\;dx.
\end{eqnarray}
Finally, combining (\ref{s3ee4a}) and (\ref{s3ee5}) we obtain
\[
\int_{\mathbb{R}^N}|\nabla \tilde{u}(x)|^2\;dx > -(tL)^2\int_{B_{\frac{\tau}{\delta L}}}V(tLx)\tilde{u}^2(x-y)\; dx > -(tL)^2\int_{\mathbb{R}^N}V(tLx)\tilde{u}^2(x-y)\; dx,
\]
proving (\ref{s3ee3}) also for $\delta\leq t\leq 1$. Therefore, $\gamma^X(t) \in X = E^+$ is not null, for all $t\in [0,1]$. 

\qquad In order to prove that I satisfies $(I_1)-(I_2)$ in Theorem 1.2, when problem (\ref{nce2}) has a solution, we choose
\begin{equation}\label{h_0}
h_0(u) := \gamma^X(t) + |v|, \quad \text{for \ each} \quad u = Rte + v \in Q, \; t \in [0,1],  
\end{equation}
where $|v|(x) = |v(x)|$ is the modulus of function $v$. Moreover, we note that if $v \in V$, since $v = v^+ - v^-$ and $|v|=v^+ + v^-$, where $v^+=\max\{v,0\} \in V$ and $v^-=\max\{0,-v\} \in V$, we conclude that $|v|\in V$.
Thus, choosing $e$ and $Q$ as before, the following lemma gives $(I_1)-(I_2)$ for $I$, in case problem (\ref{nce2}) has a solution.

\begin{lemma} \label{l3}
	Assuming $(f_1)_0-(f_2)_0$ and $(V_1)_0-(V_3)_0$, there exist $\alpha >0$ and $R>\rho>0$ such that 
	\medskip
	\newline
	$(i)\quad I|_{\partial B_\rho\cap X} \geq \alpha$;
	\medskip
	\newline
	$(ii)\quad \displaystyle\sup_{u\in Q}I(h_0(u)) <+\infty;$
	\newline
	$(iii)\quad\displaystyle\sup_{u\in \partial Q}I(h_0(u))= 0$;
	\medskip
	\newline
	$(iv)\quad h_0(\partial Q)\cap (\partial B_\rho \cap X) = \emptyset$;
	\medskip
	\newline	
$(v)\quad$There exists a unique $w \in h_0(Q)\cap(\partial B_\rho \cap X)$ and $\deg(h_0,\text{int}(Q),w)\not=0$.
\end{lemma}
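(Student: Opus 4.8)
The plan is to verify the five conditions one at a time, following the scheme of Lemma \ref{l2} but now tracking the non-trivial map $h_0$. Condition $(i)$ does not see $h_0$ at all, since the estimate lives on $\partial B_\rho\cap X$: the computation of Lemma \ref{l2}$(i)$, based on $|F_0(t)|\le\frac12\varepsilon|t|^2+\frac{C_\varepsilon}{p}|t|^p$ together with the embeddings $E\hookrightarrow L^q$, applies verbatim and fixes $\alpha>0$ for $\rho$ small. For $(ii)$ I would first record that $h_0\in C(Q,E)$: the map $t\mapsto\gamma^X(t)$ is continuous because $t\mapsto\gamma(t)$ is continuous in $H^1$ (dilation and translation act continuously) and the spectral projection onto $E^+$ is bounded, while $v\mapsto|v|$ is continuous on $H^1$; since $Q$ is compact and $I$ continuous, $h_0(Q)$ is compact and $\sup_Q I\circ h_0$ is a finite maximum.

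For $(iii)$ I would split $\partial Q$ into its bottom, lateral and top faces and use throughout the $A_0$-orthogonal splitting $h_0(u)=\gamma^X(t)+|v|\in E^+\oplus E^-$, the identity $\||v|\|=\|v\|$ (because $\big|\nabla|v|\big|=|\nabla v|$ a.e. and $|v|^2=v^2$), and $F_0\ge0$ from $(f_1)_0$. On the bottom face $t=0$ one has $\gamma^X(0)=0$, so $h_0(u)=|v|\in E^-$ and $I(|v|)=-\frac12\|v\|^2-\int F_0(|v|)\le0$, with equality at $u=0$; this already pins the supremum at $0$ once the other faces are shown to be $\le0$. On the lateral face $\|v\|=R$ one has $I(h_0(u))\le\frac12\sup_{t}\|\gamma^X(t)\|^2-\frac12R^2\le0$ as soon as $R$ exceeds the finite number $\sup_{t\in[0,1]}\|\gamma^X(t)\|$. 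The delicate face is the top face $t=1$, where $h_0(u)=\gamma^X(1)+|v|$ and $I(h_0(u))=\frac12\|\gamma^X(1)\|^2-\frac12\|v\|^2-\int F_0(\gamma^X(1)+|v|)$. For $\|v\|\ge\|\gamma^X(1)\|$ the crude bound $\frac12\|\gamma^X(1)\|^2-\frac12\|v\|^2\le0$ settles it, so the real work is on the bounded set $\{\|v\|\le\|\gamma^X(1)\|\}$. There I would use the pointwise inequality $I(w)\le I_\infty(w)$, which follows from $I(w)-I_\infty(w)=\frac12\int(V_0-V_{0,\infty})w^2\le0$ and is exactly where $(V_3)_0$ enters, combined with the strict sign $I_\infty(\gamma(1))<0$ from Proposition \ref{P4.2} and the fact, established above, that $\|\gamma^V(1)\|$ can be made arbitrarily small by taking $|y|$ large, so that $\gamma^X(1)$ is a small perturbation of the positive endpoint $\gamma(1)$. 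Carrying this estimate out uniformly in $v$ and with no monotonicity of $f_0$ is, I expect, the main analytic obstacle of the lemma.

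Conditions $(iv)$ and $(v)$ are topological. Since $|v|\in E^-=V$ and $\gamma^X(t)\in E^+=X$, one has $h_0(u)\in X$ iff $|v|=0$, i.e.\ $v=0$; on $\partial Q$ this occurs only at the poles $u=0$ and $u=Re$, whose images are $0$ and $\gamma^X(1)\neq0$. Choosing $\rho$ with $0<\rho<\|\gamma^X(1)\|$ (compatible with the smallness required in $(i)$) gives $(iv)$. For $(v)$ the set $h_0(Q)\cap X$ is exactly the curve $\{\gamma^X(t):t\in[0,1]\}$, running from $\gamma^X(0)=0$ to $\gamma^X(1)$ with $\|\gamma^X(1)\|>\rho$, so it meets $\partial B_\rho\cap X$; for $\rho$ small I would argue the crossing value $t^\ast$ is unique (using that $\|\gamma^X(t)\|$ is small and increasing near $t=0$), producing the unique $w=\gamma^X(t^\ast)$. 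To evaluate $\deg(h_0,\mathrm{int}(Q),w)$ I would pass, as in the proof of Theorem \ref{AR}, to the finite-dimensional reduction $H_0(u)=\|\gamma^X(t)\|e+|v|$ on $Q\subset\mathbb Re\oplus V$ and compute $\deg(H_0,\mathrm{int}(Q),\rho e)$ by homotoping the $e$-component $\|\gamma^X(t)\|$ to the linear profile $r=Rt$, keeping $\rho e$ off $H_s(\partial Q)$ (any preimage forces $|v|=0$, hence a pole, where the $e$-component stays $\neq\rho$). Making this degree genuinely nonzero, given that the $V$-component of $H_0$ is the folding map $v\mapsto|v|$ rather than the identity, is the second delicate point; together with the top-face estimate it is where the substance of the lemma lies, the rest being routine once $e$, $\rho$ and $R$ are fixed in this order.
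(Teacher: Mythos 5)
Your handling of $(i)$, $(ii)$, $(iv)$ and of the bottom and lateral faces in $(iii)$ coincides with the paper's proof. However, the two steps you yourself label as ``delicate'' and leave open are exactly the substance of the lemma, and both are genuine gaps. For the top face $t=1$ of $(iii)$, your plan --- split into $\|v\|\ge\|\gamma^X(1)\|$ and $\|v\|\le\|\gamma^X(1)\|$ and, on the second region, treat $\gamma^X(1)+|v|$ as a perturbation of $\gamma(1)$ --- cannot close: the perturbation is small only in the $\gamma$-variable, while $|v|$ ranges over a ball whose radius is of the order of $\|\gamma^X(1)\|$, which is large (it grows with $L$ by (\ref{s3e4})), so no smallness or compactness argument controls the term $\int F_0(\gamma^X(1)+|v|)\,dx$ uniformly in $v$. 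The paper's argument is different and removes the issue entirely: discard $-\tfrac12\|v\|^2\le 0$ to get $I(h_0(u))\le I(\gamma^X(1))+\int\big[F_0(\gamma^X(1))-F_0(\gamma^X(1)+|v|)\big]dx$, then use (a) $I(\gamma^X(1))\le 0$, which follows from $I\le I_\infty$ (hypothesis $(V_3)_0$), $I_\infty(\gamma(1))<0$ (endpoint property of the path in Proposition \ref{P4.2}), and the smallness of $\|\gamma^V(1)\|$ for $|y|$ large; and (b) the pointwise inequality $F_0(\gamma(1)+|v|)\ge F_0(\gamma(1))$, valid because $\gamma(1)\ge 0$, $|v|\ge 0$ and $F_0$ is nondecreasing on $[0,+\infty)$ by $(f_1)_0$ (here one uses monotonicity of the primitive $F_0$, which is free, not monotonicity of $f_0(s)/s$, which is unavailable). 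This gives a bound that is uniform in $v$ with no case distinction; this trick is the missing idea in your attempt.

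For $(v)$, you correctly locate the difficulty in the folding map $v\mapsto|v|$, but you do not resolve it, and the homotopy you propose deforms only the $e$-component $\|\gamma^X(t)\|$ toward the linear profile $Rt$, which cannot remove an obstruction that sits in the $V$-component. The paper proceeds along a different line: it asserts that $\psi(t):=\|\gamma^X(t)\|$ is continuous and \emph{strictly increasing on all of} $[0,1]$ (not just near $t=0$, as in your sketch), with $\psi(0)=0$ and $\psi(1)>\rho$ by (\ref{s3e4}); the Intermediate Value Theorem then gives a unique $t_0$ with $\psi(t_0)=\rho$, hence a unique $w=\gamma^X(t_0)\in h_0(Q)\cap\partial B_\rho\cap X$ with unique preimage $u_0=Rt_0e\in\text{int}(Q)$, and from this uniqueness and the injectivity of $Rte\mapsto\gamma^X(t)$ the paper concludes $\deg(h_0,\text{int}(Q),w)\neq 0$. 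So to finish along the paper's route you would need the global monotonicity of $\psi$ plus that final inference. Your reservation about that inference is in fact substantive --- uniqueness of the preimage of a continuous map does not in general force a nonzero Brouwer degree, and an even $V$-component such as $v\mapsto|v|$, whose image lies in the cone of nonnegative functions, is precisely the situation in which degree contributions can cancel --- so you have put your finger on the weakest point of the paper's own argument; but a proof attempt that stops at raising this difficulty has not established $(v)$, and hence has not proved the lemma.
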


\begin{proof} Since the proof of $(i)$ does not depend on $h_0$, it is exactly the same as in Lemma \ref{l2}. In order to prove $(ii)$, we also observe that $I\circ h_0$ is continuous and $Q\subset \mathbb{R}e\oplus V$ is a compact set, hence
	\[
	\sup_{u\in Q}I(h_0(u)) = \max_{u \in Q}I(h_0(u)) <+\infty.
	\]
	
\qquad Now, with the purpose of proving $(iii)$, we recall that
	\[
	\partial Q = (\partial B_R \cap V)\oplus \{re:0\leq r\leq R\} \cup (\bar{B}_R\cap V) \cup (\bar{B}_R\cap V)\oplus \{Re\},
	\]
hence, if $u = v + Rte\in(\partial B_R \cap V)\oplus \{re:0\leq r\leq R\}$ with $v \in\partial B_R \cap V$, we have
\begin{eqnarray*}
I(h_0(u)) &=& \dfrac{1}{2} \left[||\gamma^X(t)||^2 - R^2\right] - \int_{\mathbb{R}^N}F_0(\gamma^X(t) + |v|)\;dx\\
 &\leq& \dfrac{1}{2} \left[\max_{t \in [0,1]}||\gamma^X(t)||^2 - R^2\right]\nonumber\\
  &\leq& \dfrac{1}{2} \left[\max_{t \in [0,1]}||\gamma(t)||^2 - R^2\right] <0,
\end{eqnarray*}
for $R>0$ large enough.
If $u = v \in \bar{B}_R\cap V$, we have
\[
I(h_0(v)) = - \dfrac{1}{2}||v||^2 - \int_{\mathbb{R}^N}F_0(|v|)\;dx \leq 0.
\]
Lastly, if $u = v +Rte \in (\bar{B}_R\cap V)\oplus \{Re\}$, with $v\in \bar{B}_R\cap V$, then
\begin{eqnarray}\label{s3e3}
I(h_0(u)) &=&  \dfrac{1}{2} \left[||\gamma^X(1)||^2 - ||v||^2\right] - \int_{\mathbb{R}^N}F_0(\gamma^X(1) + |v|)\;dx\nonumber\\
& \leq& I(\gamma^X(1)) + \int_{\mathbb{R}^N}F_0(\gamma^X(1))\;dx - \int_{\mathbb{R}^N}F_0(\gamma^X(1) + |v|)\;dx \nonumber\\
&\leq& -\int_{\mathbb{R}^N}\Big[F_0(\gamma(1) + |v|) - F_0(\gamma(1))\Big]dx \nonumber\\
&\leq&0.
\end{eqnarray}
In fact, since $I_\infty(\gamma(1))<0$ and in view of $(V_3)_0$ we have $I\leq I_\infty$, then $I(\gamma(1)) < 0$. Moreover, for sufficient large $|y|>0$ we get $||\gamma^V(1)||$ small enough such that $I(\gamma(1))<0$ implies $I(\gamma^X(1))\leq 0$. In addition, provided that $f_0(s)\geq 0$ for $s\geq0$, then by the definition of $F_0$, we have $F_0(\gamma(1)+|v|)\geq F_0(\gamma(1))$, in view of $|v|\geq0$. Therefore, $\displaystyle\sup_{u\in \partial Q}I(h_0(u)) = 0$.

\qquad For obtaining $(iv)$ first we note that
\[
h_0(\partial Q) = (\partial B_R \cap V)\oplus \{\gamma^X(t):0\leq t\leq 1\} \cup (\bar{B}_R\cap V) \cup (\bar{B}_R\cap V)\oplus \{\gamma^X(1)\}
\]
and that 
\[
(\partial B_R \cap V)\oplus \{\gamma(t):0\leq t\leq 1\}\cap X = \emptyset.
\]
In addition, to guarantee that $(\bar{B}_R\cap V)\oplus \{\gamma^X(1)\} \cap X\cap \partial B_\rho = \emptyset$ it is enough to choose a sufficiently large $L>0$ such that 
\begin{equation}\label{s3e4}
||\gamma^X(1)||^2 = L^{N-2}\int_{\mathbb{R}^N}\left[|\nabla \tilde{u}^X(x)|^2 + (L)^2V(Lx+y)(\tilde{u}^X(x))^2\right] dx> \rho^2,
\end{equation}
where $\gamma^X(1) = \tilde{u}^X\left(\dfrac{x-y}{L}\right)$. Then, we conclude that
\[
h_0(\partial Q)\cap (\partial B_\rho \cap X) = h_0(\partial Q)\cap \partial B_\rho\cap X = (\bar{B}_R\cap V) \cap \partial B_\rho\cap X = \emptyset.
\]

\qquad Finally, the function $\psi: [0,1]\to \mathbb{R}$, given by $\psi(t) = ||\gamma^X(t)||$, it strictly increasing and hence injective. Moreover, $\psi$ is continuous, $\psi(0)=0$ and from (\ref{s3e4}) we have $\psi(1)>\rho$. Thus, from the Intermediate Value Theorem there exists some (unique, since $\psi$ is injective) $t_0 \in (0,1)$ such that $\psi(t_0) = \rho.$ Hence,
\[
h_0(Q)\cap(\partial B_\rho \cap X)  = \{\gamma^X(t): t \in [0,1]\}\cap \partial B_\rho = \{\gamma^X(t_0)\},
\]
\medskip
and there exists a unique $w = \gamma^X(t_0)\in h_0(Q)\cap(\partial B_\rho \cap X)$. Since $Rte\mapsto h_0(Rte) = \gamma^X(t)$ is injective, there exists a unique $u_0 = Rt_0e\in \text{int}(Q)$ such that $h_0(u_0) = \gamma^X(t_0)$. Therefore,
$\deg(h_0,\text{int}(Q),w) \not=0$, proving $(v)$.
\end{proof}

\qquad Observe that different from Lemma \ref{l2}, the strict inequality in (\ref{le}) was not essential to obtain $(iii)$ in Lemma \ref{l3}. In fact, now the essential were the properties satisfied by $\gamma(t)$. Provided that $I$ satisfies all assumptions in Theorem \ref{AR}, we are able to apply it again, getting a $(C)_c$ sequence for $I$ also when problem (\ref{nce2}) has a solution. Finally, we point out that since the linking structure changes according to either problem (\ref{nce2}) has a solution or not, then the linking level $c$ may be different in each case.

\section{Boundedness of Cerami Sequences}

\qquad In this section we are going to prove that, under the assumptions of Theorem \ref{main2}, every Cerami sequence for $I$ is bounded, particularly, those found by Theorem \ref{AR} in the previous section. 

\begin{lemma}\label{B2}
	Let $(u_n)$ be a $(C)_c$ sequence for $I$. Under the assumption of $(V_1)_0-(V_2)_0$, $(f_1)_0-(f_2)_0$ and either $(f_3)_0$ or $(f_3)'_0$, it follows that $(u_n)$ is bounded.
\end{lemma}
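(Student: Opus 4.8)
The plan is to prove boundedness of a $(C)_c$ sequence $(u_n)$ by arguing by contradiction: assume $\|u_n\| \to +\infty$ and set $w_n := u_n/\|u_n\|$, so that $\|w_n\| = 1$. The strategy splits along the two alternative hypotheses $(f_3)_0$ and $(f_3)'_0$, since these control the nonlinearity's growth in complementary ways. In both cases I would first extract consequences of the Cerami condition. From $I(u_n) \to c$ and $\|I'(u_n)\|_{E'}(1+\|u_n\|) \to 0$, the combination $2I(u_n) - I'(u_n)u_n \to 2c$ yields
\[
\int_{\mathbb{R}^N}\hat{F}_0(u_n(x))\,dx = \int_{\mathbb{R}^N}\big(f_0(u_n)u_n - 2F_0(u_n)\big)\,dx \longrightarrow 2c,
\]
because the quadratic part cancels in $2I(u_n)-I'(u_n)u_n$. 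This is the key identity that makes $\hat{F}_0$ the natural object to control, which is precisely why hypotheses $(f_3)_0$ and $(f_3)'_0$ are phrased in terms of $\hat{F}_0$.

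Under $(f_3)_0$ the argument is the more direct of the two. Here $f_0(s)/s \leq V_{0,\infty}-\delta$ globally, so the nonlinear term is uniformly subordinate to the quadratic form by a gap of size $\delta$. Testing $I'(u_n)$ against $u_n^+$ and $u_n^-$ separately and using $(e00)$--$(e000)$ together with $f_0(s)/s \le V_{0,\infty}-\delta$, I would obtain a coercive estimate of the form $\|u_n^{\pm}\|^2 \lesssim (\text{gap fraction})\,\|u_n^{\pm}\|^2 + o(\|u_n\|)$, where the gap fraction is strictly less than $1$ because $V_{0,\infty}-\delta$ sits strictly below the relevant spectral thresholds $\sigma^+ - f'(0)$ and $f'(0)-\sigma^-$. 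Since $V_0(x) \le V_{0,\infty}$ is not assumed here, I would instead use that $\sigma_{ess}(A_0) = [V_{0,\infty},+\infty)$ and that $f_0(s)/s$ stays below the bottom of the essential spectrum by $\delta$; this forces the nonlinear term to be dominated and gives $\|u_n\|^2 \lesssim o(\|u_n\|^2) + C$, a contradiction with $\|u_n\|\to\infty$.

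Under $(f_3)'_0$ the argument is subtler and I expect this to be the main obstacle. Here $\hat{F}_0 \ge 0$ and one only controls $\hat{F}_0$ from below (by $\delta$) on the set where $f_0(s)/s > V_{0,\infty}-\delta$. The boundedness $\int \hat{F}_0(u_n)\,dx \to 2c$ then forces the set $\Omega_n := \{x : f_0(u_n(x))/u_n(x) > V_{0,\infty}-\delta\}$ to have measure controlled by $2c/\delta$ asymptotically, so the region where the nonlinearity behaves dangerously (close to or above $V_{0,\infty}$) is of bounded measure. Off $\Omega_n$ one has $f_0(s)/s \le V_{0,\infty}-\delta$, recovering a gap-type estimate as in the first case. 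The heart of the proof is to pass to the normalized sequence $w_n$, show $w_n \rightharpoonup w$ weakly in $E$ (up to subsequence) and establish a dichotomy: either $w \ne 0$, in which case on $\{w \ne 0\}$ we have $|u_n| \to \infty$ and one extracts from $I'(u_n)u_n/\|u_n\|^2 \to 0$ a limiting equation forcing $(A_0 - a_0)w = 0$ in an $L^2$ sense, contradicting $a_0 \notin \sigma_p(A_0)$ from $(V_1)_0$; or $w = 0$, in which case the bounded-measure control on $\Omega_n$ together with $(f_3)'_0$ must be combined with a Lions-type vanishing argument and $(V_3)_0$ (here $V_0 \le V_{0,\infty}$ is genuinely used) to drive the quadratic energy to zero against $\|w_n\|=1$, again a contradiction. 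Assembling the measure estimate, the weak-limit equation, and the exclusion $a_0 \notin \sigma_p(A_0)$ into a single contradiction is where the careful work lies; the splitting $E = E^+ \oplus E^-$ means both components must be handled, and the indefiniteness prevents any naive coercivity, so the spectral gap information from $(V_1)_0$ must be invoked at exactly the right place.
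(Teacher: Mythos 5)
Your setup (argue by contradiction, normalize $w_n = u_n/\|u_n\|$, and use the identity $\int_{\mathbb{R}^N}\hat F_0(u_n)\,dx = 2I(u_n)-I'(u_n)u_n \to 2c$) matches the paper, but both branches of your argument contain genuine gaps. The fatal one is in the $(f_3)_0$ branch: your ``gap fraction $<1$'' claim rests on the inequality $V_{0,\infty}-\delta < \sigma^+ - f'(0)$, and this is the exact opposite of what the hypotheses force. From $(f_3)_0$ together with $(f_2)_0$ one gets $a_0 \le V_{0,\infty}-\delta$, while $(V_1)_0$ places $\sigma^+ - f'(0)$ inside $(0,a_0)$; hence $\sigma^+-f'(0) < a_0 \le V_{0,\infty}-\delta$. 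Consequently, testing $I'(u_n)$ against $u_n^+-u_n^-$ and using (\ref{e00}) only yields $\|u_n\|^2 \le \frac{V_{0,\infty}-\delta}{\sigma^+-f'(0)}\|u_n^+\|^2 + o(1)$ with a constant \emph{larger} than $1$, which gives no contradiction. This is not a fixable detail: the crossing hypothesis $(V_1)_0$ means precisely that $f_0(s)/s$ passes above part of $\sigma(A_0)$, so the nonlinear term can never be dominated by the quadratic form on all of $E^+$ (this is exactly why the problem has a linking rather than a mountain-pass geometry). The paper instead runs Lions' concentration-compactness dichotomy on $v_n$: in the vanishing case, $(V_2)_0$ gives $\int(V_0-V_{0,\infty})v_n^2\,dx \to 0$, whence $\limsup_n\|v_n\|^2_{L^2} \le 1/V_{0,\infty}$, and it is this $L^2$-mass bound (with constant $V_{0,\infty}$, not $\sigma^+-f'(0)$) that turns $(f_3)_0$ into the contradiction $1 \le 1-\delta/V_{0,\infty}$.

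The second gap is your dichotomy. You split according to whether the weak limit $w$ of $w_n$ is zero or not, but this is not Lions' dichotomy: non-vanishing can occur with concentration points $|y_n|\to\infty$, in which case $w=0$ and yet the sequence does not vanish, so neither of your branches applies (your second branch needs actual vanishing to run the Lions-type argument, and your first needs $w\ne 0$). The paper must treat this separately (its Case ii): translating, $\tilde v_n = v_n(\cdot+y_n)\rightharpoonup \tilde v\ne 0$, and since $|y_n|\to\infty$ and $V_0\to V_{0,\infty}$, the limit equation is $-\Delta\tilde v + V_{0,\infty}\tilde v = a_0\tilde v$; the contradiction there is not $a_0\notin\sigma_p(A_0)$ but the fact that the constant-coefficient operator $-\Delta+V_{0,\infty}$ has no eigenvalues at all --- an ingredient entirely absent from your sketch. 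Two smaller points: your $(f_3)'_0$ branch invokes $(V_3)_0$, which is not among the hypotheses of the lemma, so as written you would prove a weaker statement; and in the vanishing case the engine of the contradiction is the identity $\lim_n\int\frac{f_0(u_n)}{u_n}\big[(v_n^+)^2-(v_n^-)^2\big]dx = 1$, obtained by testing $I'(u_n)$ with $v_n^+-v_n^-$, which you never write down --- merely ``driving the quadratic energy to zero'' does not contradict $\|w_n\|=1$ in an indefinite setting, where $\|w_n^+\|^2-\|w_n^-\|^2$ can be small while both pieces are large.
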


\begin{proof}
Arguing by contradiction we suppose $||u_n||\to +\infty$
 as $n\to +\infty$ up to subsequences. Defining $v_n:= \dfrac{u_n}{||u_n||}$, we have $(v_n)\subset E$ bounded, hence $v_n\to v$ in $E$ and $v_n\to v$ in $L^q_{loc}(\mathbb{R}^N)$ for $q \in [2,2^*)$, then $v_n(x)\to v(x)$ almost everywhere in $\mathbb{R}^N$. Furthermore, since $(v_n)$ is bounded it must satisfy either
 \bigskip
 \newline
  $(1)$ {\bf Vanishing:} For all $R>0$, 
 \[
 \displaystyle\lim_{n \to +\infty}\sup_{y \in \mathbb{R}^N}\int_{B_r(y)}v_n^2(x)\;dx = 0;
 \]
 or
 \medskip
 \newline
 $(2)$ {\bf Non-vanishing:} There exist $\eta>0$, $0<R<+\infty$ and $(y_n)\subset \mathbb{R}^N$ such that 
 \[
 \displaystyle\lim_{n \to +\infty}\int_{B_r(y_n)}v_n^2(x)\;dx \geq \eta >0.
 \]
 We prove that both are impossible for $(v_n)$ arriving at the desired contradiction.
 
 \qquad First we prove that $(1)$ is not possible to $(v_n)$. In fact, supposing that $(v_n)$ is a vanishing sequence, in view of $(V_2)_0$ one has
 \begin{equation}\label{lb21}
 \int_{\mathbb{R}^N}\big(V_0(x)-V_{0,\infty}\big)v^2_n(x)\;dx \to 0 \quad \text{as} \quad n\to+\infty.
 \end{equation}
 Then, in view of (\ref{lb21}) we arrive at
 \[
 \displaystyle\lim_{n \to +\infty}\int_{\mathbb{R}^N}\left(|\nabla v_n(x)|^2+V_{0,\infty}v^2_n(x)\right)dx = \lim_{n \to +\infty}\int_{\mathbb{R}^N}\left(|\nabla v_n(x)|^2+V_0(x)v^2_n(x)\right)dx.
 \]
 Hence,
 \begin{eqnarray*}
 \displaystyle\limsup_{n \to +\infty}\int_{\mathbb{R}^N}V_{0,\infty}v^2_n(x)\;dx &\leq&
 \displaystyle\lim_{n \to +\infty}\int_{\mathbb{R}^N}\left(|\nabla v_n(x)|^2+V_{0,\infty}v^2_n(x)\right)dx\\
 &=& \lim_{n \to +\infty}\left(||v^+_n||^2-||v^-_n||^2\right)\\
 &\leq&\lim_{n \to +\infty}||v_n||^2=1
 \end{eqnarray*}
and
\begin{equation}\label{lb22}
\displaystyle\limsup_{n \to +\infty}\int_{\mathbb{R}^N}v^2_n(x)\;dx \leq \dfrac{1}{V_{0,\infty}}.
\end{equation}
Furthermore, since $(u_n)$ is a Cerami sequence, it follows that
\begin{equation*}
	o_n(1) = \dfrac{I'(u_n)}{||u_n||}\left(v^+_n - v^-_n\right) = 1 - \int_{\mathbb{R}^N}\dfrac{f_0(u_n(x))}{u_n(x)}\left[(v_n^+(x))^2 - (v_n^-(x))^2\right]dx,
\end{equation*}
then
\begin{equation}\label{lb23}
\lim_{n \to +\infty}\int_{\mathbb{R}^N}\dfrac{f_0(u_n(x))}{u_n(x)}\left[(v_n^+(x))^2-(v_n^-(x))^2\right]dx = 1.
\end{equation}

\qquad Now, if $(f_3)_0$ is satisfied, from (\ref{lb22}) it implies that
\begin{eqnarray}\label{lb24}
1 &=& \lim_{n \to +\infty}\int_{\mathbb{R}^N}\dfrac{f_0(u_n(x))}{u_n(x)}\left[(v_n^+(x))^2-(v_n^-(x))^2\right]dx\nonumber\\
&\leq& \limsup_{n \to +\infty}\int_{\mathbb{R}^N}\left(V_{0,\infty} - \delta\right)\left[(v_n^+(x))^2-(v_n^-(x))^2\right]dx\nonumber\\
&\leq&\left(V_{0,\infty} - \delta\right)\limsup_{n \to +\infty}\int_{\mathbb{R}^N}v^2_n(x)\;dx\nonumber\\
&\leq&1 - \dfrac{\delta}{V_{0,\infty}},
\end{eqnarray}
which is an absurd. On the other hand, if $(f_3)'_0$ holds true, we define
\[
\Omega_n:= \left\{x\in \mathbb{R}^N: \dfrac{f_0(u_n(x))}{u_n(x)} \leq V_{0,\infty}\right\},
\]
and again from (\ref{lb22}), we conclude that 
\begin{equation}\label{lb25}
\int_{\Omega_n}\dfrac{f_0(u_n(x))}{u_n(x)}\left[(v^+_n(x))^2 - (v^-_n(x))^2\right]dx\leq \left(V_{0,\infty}-\delta\right)\int_{\Omega_n}v_n^2(x)\;dx \leq 1 - \dfrac{\delta}{V_{0,\infty}}.
\end{equation}
Thereby, in virtue of (\ref{lb23}) and (\ref{lb25}) we arrive at
\begin{equation}\label{lb26}
\liminf_{n \to +\infty}\int_{\mathbb{R}^N\setminus\Omega_n}\dfrac{f_0(u_n(x))}{u_n(x)}\left[(v^+_n(x))^2 - (v^-_n(x))^2\right]dx \geq \dfrac{\delta}{V_{0,\infty}} >0.
\end{equation}

\qquad Since ${f(s)}/{s}$ is bounded, it follows that
\begin{equation}\label{lb27}
0< \liminf_{n \to +\infty}\int_{\mathbb{R}^N\setminus\Omega_n}\dfrac{f_0(u_n(x))}{u_n(x)}\left[(v^+_n(x))^2 - (v^-_n(x))^2\right]dx\leq C\liminf_{n \to +\infty} \int_{\mathbb{R}^N\setminus\Omega_n}v^2_n(x)\;dx,
\end{equation}
then applying H\"older inequality with some $q \in (2,2^*)$, we obtain
\begin{equation}\label{lb28}
\liminf_{n \to +\infty} \int_{\mathbb{R}^N\setminus\Omega_n}v^2_n(x)\;dx\leq |\mathbb{R}^N\setminus \Omega_n|^{\frac{2^*}{2^*-q}}||v_n||^2_{L^{2\frac{2^*}{q}}(\mathbb{R}^N)},
\end{equation}
and since $(v_n)$ is a vanishing sequence, by Lions' Lemma,  $||v_n||^2_{L^{2\frac{2^*}{q}}(\mathbb{R}^N)} \to 0$ as $n \to +\infty$, hence from (\ref{lb28}), we conclude that $|\mathbb{R}^N\setminus \Omega_n| \to +\infty$ as $n \to+\infty$ so that 
\begin{equation}\label{lb29}
\int_{\mathbb{R}^N}\hat{F}_0(u_n(x))\;dx\geq \int_{\mathbb{R}^N\setminus \Omega_n}\hat{F}_0(u_n(x))\;dx\geq \delta|\mathbb{R}^N\setminus \Omega_n|
\end{equation}
and (\ref{lb29}) implies that 
\begin{equation}\label{lb210}
\displaystyle\lim_{n \to +\infty}\int_{\mathbb{R}^N}\hat{F}_0(u_n(x))\;dx = +\infty.
\end{equation}
On the other hand,
\begin{equation}\label{lb211}
\int_{\mathbb{R}^N}\hat{F}_0(u_n(x))\;dx = 2I(u_n) - I'(u_n)u_n \to 2c \quad \text{as} \quad n \to +\infty.
\end{equation}
Therefore, (\ref{lb211}) contradicts (\ref{lb210}) and $(v_n)$ does not satisfy $(1)$.

\qquad Henceforth we show that $(2)$ is also impossible for $(v_n)$. Supposing that $(v_n)$ is non-vanishing we consider two cases:
\medskip
\newline
{\bf Case i:} $(y_n)$ is bounded. Since $(y_n)$ is bounded, then $y_n \to y$, up to subsequences, then considering $\tilde{v}_n(x):= v_n(x+y_n)$ we have $(\tilde{v}_n)$ bounded from the equivalence of norms and then $\tilde{v}_n \rightharpoonup \tilde{v}$ in $E$, up to subsequences. Analogously to $(v_n)$ we have $\tilde{v}_n(x) \to \tilde{v}(x)$ almost everywhere, however, $v_n(x+y_n) \to v(x+y)$ almost everywhere, hence $\tilde{v}(x) = v(x+y)$ almost everywhere in $\mathbb{R}^N$ and $v\equiv 0$ if and only if $\tilde{v}\equiv 0$. Provided that $(v_n)$ satisfies $(2)$ we have
 \begin{eqnarray}\label{nvc}
||\tilde{v}||^2_{L^2(\mathbb{R}^N)}&\geq&||\tilde{v}||^2_{L^2(B_R(0))}\nonumber\\
&=&\displaystyle\lim_{n \to +\infty}\int_{B_r(0)}\tilde{v}_n^2(x)\;dx\nonumber\\
&=&  \displaystyle\lim_{n \to +\infty}\int_{B_r(0)}v_n^2(x+y_n)\;dx\nonumber\\
&=& \displaystyle\lim_{n \to +\infty}\int_{B_r(y_n)}v_n^2(x)\;dx\nonumber\\
&\geq& \eta >0,
\end{eqnarray}
proving that $\tilde{v} \not= 0$, therefore $v\not = 0.$

\qquad Moreover, we claim $v$ is an eigenvector of $A_0$ associated to $a_0$. In fact, given $\varphi \in C^\infty_0(\mathbb{R}^N)$, since $(u_n)$ is a Cerami sequence
\begin{equation}\label{lb213}
\int_{\mathbb{R}^N}\nabla v_n(x) \cdot \nabla \varphi(x) \;dx + \int_{\mathbb{R}^N}V_0(x)v_n(x)\varphi(x)\;dx = \int_{\mathbb{R}^N}\dfrac{f_0(u_n(x))}{u_n(x)}v_n(x)\varphi(x)\;dx + o_n(1).
\end{equation}
By the weak convergence of $(v_n)$ and from hypothesis $(f_2)_0$, applying Lebesgue Dominated Convergence Theorem, passing (\ref{lb213}) to the limit we obtain
\[
\int_{\mathbb{R}^N}\nabla v(x) \cdot \nabla \varphi(x) \;dx + \int_{\mathbb{R}^N}V_0(x)v(x)\varphi(x)\;dx = a_0\int_{\mathbb{R}^N}v(x)\varphi(x)\;dx,
\]
showing the claim.

\qquad Finally, to arrive at a contradiction, we observe that since $a_0 \notin \sigma_p(A_0)$, then $v$ cannot be an eigenfunction of $A_0$ associated to $a_0$.
Therefore we have proved that $(2)$ cannot occur if $(y_n)$ is bounded, concluding Case i.
\medskip
\newline
{\bf Case ii:} $(y_n)$ is unbounded. Considering again $\tilde{u}_n(x)= u_n(x+y_n)$ and $\tilde{v}_n(x) = v_n(x+y_n)$ we are going to show that for all $\varphi \in C^\infty_0(\mathbb{R}^N)$ one has
\begin{equation}\label{lb215}
\int_{\mathbb{R}^N} \nabla \tilde{v}(x) \cdot \nabla \varphi(x)\;dx + \int_{\mathbb{R}^N}V_{0,\infty}\tilde{v}(x)\varphi(x)\;dx = a_0 \int_{\mathbb{R}^N}\tilde{v}(x) \varphi (x)\;dx,
\end{equation}
with $\tilde{v}\not= 0$ the weak limit of $(\tilde{v}_n)$, which is nonzero in view of (\ref{nvc}), provided that $(v_n)$ is non-vanishing. Then, (\ref{lb215}) implies that $\tilde{v}$ is an eigenvector of $-\Delta + V_{0,\infty}$, with eigenvalue $a_0$, which is a contradiction, since operator $-\Delta$ has no eigenvalues in $\mathbb{R}^N$.

\qquad  In order to prove (\ref{lb215}), given $\varphi \in C^\infty_0(\mathbb{R}^N)$ we define $\varphi_n(x):=\varphi(x-y_n)$ for all $x \in \mathbb{R}^N$, hence $(\varphi_n)$ is bounded in $E$, in view of the equivalence of norms. Since $|y_n|\to +\infty$, up to subsequences, making the necessary change of variables, using the weak convergence information and applying Lebesgue Dominated Convergence Theorem, we obtain
{\small
\begin{eqnarray}\label{lb216}
\dfrac{I'(u_n)}{||u_n||}\varphi_n + \int_{\mathbb{R}^N}\dfrac{f(u_n(x))}{u_n(x)}{v}_n(x)\varphi_n(x)\;dx
&=& \int_{\mathbb{R}^N}\nabla v_n(x)\cdot \nabla \varphi_n(x) \;dx + \int_{\mathbb{R}^N}V_0(x)v_n(x)\varphi_n(x)\;dx\nonumber\\
&=& \int_{\mathbb{R}^N}\nabla \tilde{v}_n(x)\cdot \nabla \varphi(x) \;dx + \int_{\mathbb{R}^N}V_0(x+y_n)\tilde{v}_n(x)\varphi(x)\;dx\nonumber\\
&=& \int_{\mathbb{R}^N}\nabla \tilde{v}(x)\cdot \nabla \varphi(x) \;dx + \int_{\mathbb{R}^N}V_{0,\infty}\tilde{v}(x)\varphi(x)\;dx + o_n(1).
\end{eqnarray}
}
On the other hand, in view of $(f_2)_0$, provided that $(u_n)$ is a Cerami sequence, it follows that
\begin{eqnarray}\label{lb217}
\dfrac{I'(u_n)}{||u_n||}\varphi_n + \int_{\mathbb{R}^N}\dfrac{f(u_n(x))}{u_n(x)}{v}_n(x)\varphi_n(x)\;dx &=&
o_n(1) + \int_{\mathbb{R}^N}\dfrac{f(\tilde{u}_n(x))}{\tilde{u}_n(x)}\tilde{v}_n(x)\varphi(x)\;dx\nonumber\\ 
&=& o_n(1) + a_0\int_{\mathbb{R}^N}\tilde{v}(x)\varphi(x)\;dx.
\end{eqnarray}
Combining (\ref{lb216}) and $(\ref{lb217})$ we arrive at (\ref{lb215}). 
\end{proof}
	 
\section{Proof of Main Results}

\qquad Previous sections have proved the existence of $(u_n)$ a bounded $(C)_c$ sequence for $I$ in both cases, when either $(f_3)$ or $(f_3)'$ is satisfied. Now, by analyzing the existence of solution to problem (\ref{nce2}) we are going to be able to prove Theorem \ref{main2}.

\begin{proof}[Proof of Theorem \ref{main2}]
	Provided that we have the existence of $(u_n)$ a $(C)_c$ bounded sequence for $I$, it implies that $u_n \rightharpoonup u$ in $E$ and $u_n \to u$ in $L^2_{loc}(\mathbb{R}^N)$. Given $\varphi \in C^\infty_0(\mathbb{R}^N)$, let $K\subset\mathbb{R}^N$ be the compact support of $\varphi$. Since $(u_n)$ is a $(C)_c$ sequence for $I$, from the weak convergence and the Lebesgue Dominated Convergence Theorem, it follows that
	\begin{eqnarray}\label{nce3}
	o_n(1) &=& I'(u_n)\varphi\nonumber\\
	&=& \Big(u_n^+ - u_n^-,\varphi\Big) - \int_{\mathbb{R}^N}f_0(u_n(x))\varphi(x)\;dx\nonumber\\ 
	&=& \Big(u^+ - u^-,\varphi\Big) - \int_{K}f_0(u(x))\varphi(x)\;dx + o_n(1)\nonumber\\
	&=& I'(u)\varphi + o_n(1).
	\end{eqnarray}
	Thus, $I'(u)\varphi =0$, and since $\varphi$ is arbitrary, by density we obtain $I'(u)\equiv0$ and therefore $u$ is a critical point of $I$. 
	If $u\not=0$, we obtain a nontrivial critical point of $I$. Then, we suppose by contradiction that $u\equiv 0$ and we are going to arrive at an absurd.
	
	\qquad First, we claim that in this case $(u_n)$ is also a $(C)_c$ sequence for $I_\infty: H^1(\mathbb{R}^N)\to \mathbb{R}$, the functional associated to (\ref{nce2}). In fact,
	\begin{eqnarray}\label{nce4}
	I_\infty(u_n) &=& \dfrac{1}{2}\int_{\mathbb{R}^N}\Big(|\nabla u_n(x)|^2+V_{0,\infty}u_n^2(x)\Big)\;dx - \int_{\mathbb{R}^N}F_0(u_n(x))\;dx\nonumber\\
	&=& I(u_n) - \int_{\mathbb{R}^N}\big(V_{0,\infty} - V_0(x)\big)u^2_n(x)\;dx\nonumber\\
	&=& I(u_n) + o_n(1),
	\end{eqnarray}
	in view of $(V_2)_0$ and since $u_n\to 0$ in $L^2_{loc}(\mathbb{R}^N)$. Moreover, for the same reasons, we obtain
	\[
	\sup_{||v||\leq 1}\left|\Big(I_\infty'(u_n)-{I}'({u}_n)\Big)v\right| = \sup_{||v||\leq 1}\left|\int_{\mathbb{R}^N}\Big(V_{0,\infty} - V_0(x)\Big){u}_n(x)v(x)\;dx\right| \to 0.
	\]
	Now, since $c>0$ we claim that $(u_n)$ does not vanish. In fact, given $\varepsilon>0$ from $(f_1)_0-(f_2)_0$ there exists $C_\varepsilon>0$ such that
	\[
	\int_{\mathbb{R}^N}|f_0(u(x))u(x)|\;dx \leq \varepsilon||u||^2_{L^2(\mathbb{R}^N)} + C_\varepsilon||u||^p_{L^p(\mathbb{R}^N)},
	\]
	thus, if $(u_n)$ vanishes, since $\varepsilon$ is arbitrary, we get that 
	\begin{equation}\label{nce5}
		\displaystyle\int_{\mathbb{R}^N}f_0(u_n(x))\big(u_n^+(x)-u_n^-(x)\big)\;dx \to 0, \quad \text{as} \quad n\to +\infty.
	\end{equation}
Since $(u_n)$ is a $(C)_c$ sequence for $I$ and in virtue of (\ref{nce5}) we arrive at
\begin{equation}\label{nce6}
o_n(1) = I'(u_n)\big(u_n^+ - u_n^-\big)  + \int_{\mathbb{R}^N}f_0(u_n(x))\big(u_n^+(x) - u_n^-(x)\big)\;dx = ||u_n||^2,
\end{equation} 
contradicting that 
\[\displaystyle\liminf_{n \to +\infty}||u_n||^2\geq \displaystyle\liminf_{n \to +\infty}||u_n^+||^2\geq \displaystyle\lim_{n \to +\infty}2I(u_n) = c>0.
\]

\qquad Thus, $(u_n)$ is a non-vanishing sequence, hence there exist $\eta>0$, $R>0$ and $(y_n)\subset \mathbb{R}^N$ such that
\[
\displaystyle\int_{B_R(y_n)}u^2_n(x)\;dx\geq \eta >0.
\]
Setting $\tilde{u}_n(x):=u_n(x+y_n)$ as before, since we have proved $(u_n)$ is a $(C)_c$ sequence for $I_\infty$, then so does $(\tilde{u}_n)$, provided that $I_\infty$ is invariant to translations. Moreover, $(\tilde{u}_n)$ is bounded, hence $\tilde{u}_n \rightharpoonup \tilde{u}$ in $E$ and $\tilde{u}_n \to \tilde{u}$, in $L^2_{loc}(\mathbb{R}^N)$, up to subsequences. Given $\varphi \in C^\infty_0(\mathbb{R}^N)$, let $K\subset\mathbb{R}^N$ be the compact support of $\varphi$. Since $(\tilde{u}_n)$ is a $(C)_c$ sequence for $I_\infty$, from the weak convergence and the Lebesgue Dominated Convergence Theorem, it follows that
\begin{eqnarray}\label{nce7}
o_n(1) &=& I_\infty'(\tilde{u}_n)\varphi\nonumber\\
&=& \int_{\mathbb{R}^N}\Big(\nabla \tilde{u}_n(x)\cdot \nabla \varphi(x) + V_{0,\infty}\tilde{u}_n(x)\varphi(x)\Big)\;dx - \int_{\mathbb{R}^N}f_0(u_n(x))\varphi(x)\;dx\nonumber\\ 
&=& \int_{\mathbb{R}^N}\Big(\nabla \tilde{u}(x)\cdot \nabla \varphi(x) + V_{0,\infty}\tilde{u}(x)\varphi(x)\Big)\;dx - \int_{K}f_0(u(x))\varphi(x)\;dx + o_n(1)\nonumber\\
&=& I_\infty'(\tilde{u})\varphi + o_n(1).
\end{eqnarray}
Since $\varphi$ is arbitrary, in view of (\ref{nce7}) we conclude that $I'_\infty(\tilde{u})\equiv0$. In addition, since $(u_n)$ is non-vanishing arguing as in (\ref{nvc}) with $(u_n)$ and $(\tilde{u}_n)$ instead of $(v_n)$ and $(\tilde{v}_n)$, we get that $\tilde{u}\not= 0$, namely $\tilde{u}$ is a nontrivial critical point to $I_\infty$.

\qquad At this point, if problem (\ref{nce2}) has no a nontrivial solution, in particular when $(f_3)_0$ is satisfied, in view of Proposition \ref{P4.1} we get a contradiction, since $\tilde{u}\not= 0$. Therefore, $u\not=0$ is a nontrivial critical point for $I$. On the other hand, in case problem (\ref{nce2}) has a nontrivial weak solution, if $(f_3)'_0$ and $(V_3)_0$ hold, we have $Q_0(s)\geq0$ and  from Fatou's Lemma we get that
\begin{eqnarray}\label{nce8}
c &=& \lim_{n \to +\infty}\left[I_\infty(\tilde{u}_n) - \dfrac{1}{2}I'_\infty(\tilde{u}_n)\tilde{u}_n\right]\nonumber\\
&=& \lim_{n \to +\infty}\dfrac{1}{2}\int_{\mathbb{R}^N}Q_0(\tilde{u}_n(x))\;dx\nonumber\\
&\geq&  \dfrac{1}{2}\int_{\mathbb{R}^N}Q_0(\tilde{u}(x))\;dx\nonumber\\
&=& I_\infty(\tilde{u}) - \dfrac{1}{2}I'_\infty(\tilde{u})\tilde{u}\nonumber\\
&=& I_\infty(\tilde{u}).
\end{eqnarray}
Thus, $\tilde{u}\not=0$ is a critical point of $I_\infty$ with $I_\infty(\tilde{u})\leq c.$ 
Now, since $h_0$ defined in (\ref{h_0}) belongs to $\Gamma$, we obtain
\begin{equation}\label{s4e1}
c\leq \sup_{u\in Q}I(h_0(u)).
\end{equation}
In addition, since $\gamma^X(t) \in X = E^+$ for all $t\in[0,1]$, and provided that $|v|\geq 0$, by the definition of $F_0(s)$, for all $u = tRe+ v \in Q$  we obtain
\begin{eqnarray*}
I({h}_0(u)) &=& \dfrac{1}{2}\left[||\gamma^X(t)||^2 - ||v||^2 \right] - \int_{\mathbb{R}^N}F_0(\gamma^X(t)+|v|)\;dx \\
&\leq& I(\gamma^X(t)) - \int_{\mathbb{R}^N}\left[F_0(\gamma^X(t)+|v|) - F_0(\gamma^X(t))\right]dx\\
&\leq& I(\gamma^X(t)),
\end{eqnarray*}
hence
\begin{equation}\label{s4e2}
\sup_{u\in Q}I(h_0(u))\leq\sup_{t\in[0,1]}I(\gamma^X(t)).
\end{equation} 
Moreover, if we assume $(V_3)_0$ and also $V_0(x)\not\equiv V_{0,\infty}$, we have
\begin{equation}\label{s4e3}
I(v)<I_\infty(v) \quad \text{for\; all} \quad v \in E.
\end{equation}
Furthermore, we recall that for $|y|>0$ large enough, we get $||\gamma^V(t)||_{L^2(\mathbb{R}^N)}$ sufficiently small as in (\ref{gammadelta}) and (\ref{gamma0}), hence considering that $\gamma^X(t)$ and $\gamma^V(t)$ are orthogonal in $E$, for $\varepsilon>0$ small enough we arrive at
{\small
\begin{eqnarray}\label{s4e4}
I_\infty(\gamma(t)) &=& I_\infty(\gamma^X(t)) + \int_{\mathbb{R}^N}\left[\nabla\gamma^X(t)\cdot \nabla\gamma^V(t) + V_{0,\infty}\gamma^X(t)\gamma^V(t)\right] dx\nonumber\\
&+& \dfrac{1}{2}\int_{\mathbb{R}^N}\left[|\nabla\gamma^V(t)|^2 + V_{0,\infty}|\gamma^V(t)|^2\right] dx - \int_{\mathbb{R}^N}\left[F_0(\gamma(t)) - F_0(\gamma^X(t))\right]dx\nonumber\\
&\geq& I_\infty(\gamma^X(t)) + \int_{\mathbb{R}^N}\left[V_{0,\infty} -V_0(x)\right]\gamma^X(t)\gamma^V(t)\; dx + C||\gamma^V(t)||_{L^2(\mathbb{R}^N))}^2\nonumber\\
&-& \int_{\mathbb{R}^N}f_0(\gamma^X(t) + \theta_t\gamma^V(t))\gamma^V(t)\;dx\nonumber\\
&\geq& I_\infty(\gamma^X(t)) -\varepsilon\int_{\mathbb{R}^N\backslash B_{\tau}}|\gamma^X(t)||\gamma^V(t)|\; dx -C\int_{B_\tau}|\gamma^X(t)||\gamma^V(t)|\;dx +C||\gamma^V(t)||_{L^2(\mathbb{R}^N))}^2\nonumber\\
&-& \int_{\mathbb{R}^N}\left[\varepsilon|\gamma^X(t)||\gamma^V(t)| + \varepsilon|\gamma^V(t)|^2 +C_\varepsilon|\gamma^X(t)|^{p-1}|\gamma^V(t)| +C_\varepsilon|\gamma^V(t)|^p \right]dx\nonumber\\
&\geq& 
I_\infty(\gamma^X(t)) -2\varepsilon||\gamma^X(t)||_{L^2(\mathbb{R}^N)}||\gamma^V(t)||_{L^2(\mathbb{R}^N)} -C\varepsilon||\gamma^V(t)||^2_{L^2(\mathbb{R}^N)} +C||\gamma^V(t)||^2_{L^2(\mathbb{R}^N))}\nonumber\\
&-& \left[ \varepsilon||\gamma^V(t)||^2_{L^2(\mathbb{R}^N)} +  \varepsilon||\gamma^X(t)||^p_{L^p(\mathbb{R}^N)}  +2C_\varepsilon||\gamma^V(t)||^p_{L^p(\mathbb{R}^N)} \right]\nonumber\\
&\geq& I_\infty(\gamma^X(t)) +C||\gamma^V(t)||_{L^2(\mathbb{R}^N))}^2 -\varepsilon C - 2C_\varepsilon||\gamma^V(t)||^p_{L^2(\mathbb{R}^N)}\nonumber\\
&>& I_\infty(\gamma^X(t)) + \dfrac{C}{2}||\gamma^V(t)||^2_{L^2(\mathbb{R}^N)} - \varepsilon C\nonumber\\
&>& I_\infty(\gamma^X(t)),
\end{eqnarray}
}where we the constants $C,C_\varepsilon$ can change from one line to another, are uniform in $t\in[0,1]$, and we are using that $V=E^-$ is finite dimensional, hence all norms are equivalent on $V$. Moreover, we obtain $\theta_t$ by applying the Mean Value Theorem and subsequently we consider the growth of $f_0$, hence $p>2$. It is important to highlight that we are strongly using the exponential decay of $\tilde{u}$ at infinity.
Therefore, combining (\ref{s4e1})-(\ref{s4e4}) we arrive at
\[ 
c\leq \sup_{u\in Q}I(h_0(u)) \leq \sup_{t\in[0,1]}I(\gamma^X(t)) < \sup_{t\in[0,1]}I_\infty(\gamma^X(t))\leq \sup_{t\in[0,1]}I_\infty(\gamma(t)) = I_\infty(\tilde{u}) \leq c
 \]
 which provides an absurd. Thus, $u \not = 0$ is a nontrivial critical point for $I$, in case $V_0(x)\not\equiv V_{0,\infty}$.
 
\qquad In case $V_0(x)\equiv V_{0,\infty}$, we are looking at the ``problem at infinity" and  provided that \ ${\sigma(A_0)= [V_{0,\infty}, +\infty)}$, \ from $(V_1)_0$ \ we have $\sigma^+ = V_\infty$, hence ${(0, a_0)\cap [V_{0,\infty}, +\infty)\not= \emptyset}$, then ${a_0>V_{0,\infty}>0}$. Thus, if we set $H(s) := F_0(s) - \dfrac{1}{2}V_{0,\infty}s^2$, it satisfies $H(s_0)>0$ for sufficiently large $s_0>0$ and hence we are able to apply Proposition \ref{P4.1} to guarantee the existence of a nontrivial critical point for $I$. Therefore, in all cases, we obtain a nontrivial critical point for $I$, which is a nontrivial solution to problem (\ref{prob_0}).
\end{proof}

\end{document}